\newtheorem{theorem}{Theorem}[section]
\newtheorem{definition}[theorem]{Definition}
\newtheorem{lemma}[theorem]{Lemma}
\newtheorem{proposition}[theorem]{Proposition}
\newtheorem{remark}[theorem]{Remark}
\newenvironment{proof}[1][Proof]{\textbf{#1.} }{\hfill\rule{0.5em}{0.5em}}
{\catcode`\@=11\global\let\AddToReset=\@addtoreset
	\AddToReset{equation}{section}
	
	\AddToReset{theorem}{section}

\begin{document}
\title{An elliptic semilinear equation with source term and boundary measure data: the supercritical case
}
\author{
{\bf Marie-Fran\c{c}oise Bidaut-V\'eron\thanks{E-mail address: veronmf@univ-tours.fr}}\\[0.5mm]
 {\bf Giang Hoang\thanks{ E-mail address: hgiangbk@gmail.com }}\\[0.5mm]
{\bf Quoc-Hung Nguyen\thanks{ E-mail address: quoc-hung.nguyen@epfl.ch
}}\\[0.5mm]
{\bf Laurent V\'eron\thanks{ E-mail address: Laurent.Veron@lmpt.univ-tours.fr}}\\[2mm]
{\small Laboratoire de Math\'ematiques et Physique Th\'eorique, }\\
{\small  Universit\'e Fran\c{c}ois Rabelais,  Tours,  FRANCE}}
\date{}  

\maketitle
\begin{abstract} We give new criteria for the existence of weak solutions to an equation with a super linear source term 
\begin{align*}
-\Delta u = u^q ~~\text{in}~\Omega,~~u=\sigma~~\text{on }~\partial\Omega
\end{align*}
where $\Omega$ is a either a bounded smooth domain or $\mathbb{R}_+^{N}$, $q>1$  and $\sigma\in \mathfrak{M}^+(\partial\Omega)$ is a nonnegative Radon measure on $\partial\Omega$.  One of the criteria we obtain is expressed in terms of some Bessel capacities on $\partial\Omega$. We also give a sufficient condition for the existence of weak solutions to equation with source mixed terms.
 \begin{align*}
 -\Delta u = |u|^{q_1-1}u|\nabla u|^{q_2} ~~\text{in}~\Omega,~~u=\sigma~~\text{on }~\partial\Omega
 \end{align*}
 where $q_1,q_2\geq 0, q_1+q_2>1, q_2<2$,  $\sigma\in \mathfrak{M}(\partial\Omega)$ is a Radon measure on $\partial\Omega$. 
\end{abstract}
\section{Introduction and main results}
Let $\Omega$ be  a bounded smooth domain in $\mathbb{R}^N$ or $\Omega=\mathbb{R}_+^{N}:=\mathbb{R}^{N-1}\times(0,\infty)$, $N\geq 3$, and $g:\mathbb{R}\times\mathbb{R}^N\mapsto \mathbb{R}$ be a continuous function. In this paper, we study the solvability of the problem 
\begin{equation}\label{121120149b} \begin{array}{lll}
-\Delta u = g(u,\nabla u)\qquad&\text{in}~\Omega, \\
\phantom{-\Delta}
u = \sigma\qquad&\text{on }~\partial\Omega,\\ 
\end{array}\end{equation}
where $\sigma\in \mathfrak{M}(\partial\Omega)$ is a Radon measure on $\partial\Omega$.
All  solutions are understood in the usual very weak sense, which means that  $u\in L^1_{}(\Omega)$, $g(u,\nabla u)\in L^1_{\rho}(\Omega)$, where $\rho(x)$ is the distance from $x$ to $\partial\Omega$ when $\Omega$ is bounded, or  $u\in L^1_{}(\mathbb{R}_+^{N}\cap B)$, $g(u,\nabla u)\in L^1_{\rho}(\mathbb{R}_+^{N}\cap B)$ for any ball $B$ if $\Omega=\mathbb{R}_+^{N}$, and 
\begin{align}
\int_\Omega u (-\Delta \xi)dx =\int_\Omega g(u,\nabla u)\xi dx-\int_{\partial\Omega}\frac{\partial \xi}{\partial n}d\sigma
\end{align}
for any $\xi \in C^2(\overline{\Omega})\cap C_c(\mathbb{R}^N)$ with $\xi=0$ in $\Omega^c$, where $\rho(x)=\operatorname{dist}(x,\partial\Omega)$, $n$ is the outward unit vector on $\partial\Omega$.
It is well-known that  such a solution  $u$ satisfies 
\begin{align*}
u=\mathbf{G}[g(u,\nabla u)]+\mathbf{P}[\sigma]~~\text{a. e. in } \Omega,
\end{align*}
where $\mathbf{G}[.],\mathbf{P}[.]$,  respectively the Green and the Poisson potentials associated to  $-\Delta$ in $\Omega$, are defined from the Green and the Poisson kernels by 
\begin{align*}
\mathbf{P}[\sigma](y)=\int_{\partial\Omega}\operatorname{P}(y,z)d\sigma(z), ~~\mathbf{G}[g(u,\nabla u)](y)=\int_{\Omega}\operatorname{G}(y,x)g(u,\nabla u)(x)dx,
\end{align*}
see \cite{MV5}.\smallskip 

Our main goal is to establish necessary and sufficient conditions for the existence of weak solutions of \eqref{121120149b} with boundary  measure data, together with sharp pointwise estimates of the solutions. In the sequel we study two cases for the problem 
\eqref{121120149b}:\smallskip

\noindent {\bf 1}- The pure power case

\begin{equation}\label{pow1} \begin{array}{lll}
-\Delta u = |u|^{q-1}u\qquad&\text{in}~\Omega, \\
\phantom{-\Delta}
u = \sigma\qquad&\text{on }~\partial\Omega,\\ 
\end{array}\end{equation}
with $u\geq 0$, $q>1$ and $\sigma\geq 0$.
\smallskip

\noindent {\bf 2}- The mixed gradient-power case
\begin{equation}\label{pow2} \begin{array}{lll}
-\Delta u = |\nabla u|^{q_2}|u|^{q_1-1}u\qquad&\text{in}~\Omega, \\
\phantom{-\Delta}
u = \sigma\qquad&\text{on }~\partial\Omega,\\ 
\end{array}\end{equation}
with $q_1,q_2>0$, $q_1+q_2>1$ and $q_2<2$.\smallskip

The problem \eqref{pow1} has been first studied by Bidaut-V\'eron and Vivier \cite{BiVi} in the subcritical case $1<q<\frac{N+1}{N-1}$ with $\Omega$ bounded. They proved that \eqref{pow1} admits a nonnegative solution provided $\sigma(\partial\Omega)$ is small enough. They also proved that for any $\sigma\in\mathfrak M^+_b(\partial\Omega)$ there holds

\begin{equation}\label{pow3} \begin{array}{lll}
{\bf G}[({\bf P}[\sigma])^q]\leq c\sigma(\partial\Omega){\bf P}[\sigma]
\end{array}\end{equation}
for some $c=c(N,p,q)>0$. Then Bidaut-V\'eron and Yarur \cite{BiYa}  considered again the problem \eqref{pow1} in a bounded domain in a more general situation since they allowed both interior and boundary measure data, giving a complete description of the solutions in the subcritical case, and sufficient conditions for existence in the supercritical case. In particular they showed that
the problem \eqref{pow1} has a solution  if and only if 
\begin{equation}\label{pow4} \begin{array}{lll}
{\bf G}[({\bf P}[\sigma])^q]\leq c{\bf P}[\sigma]
\end{array}\end{equation}
for some $c=c(N,q,\Omega)>0$, see \cite[Th 3.12-3.13, Remark 3.12]{BiYa}. \smallskip

The absorption case, i.e. $g(u,\nabla u)=-|u|^{q-1}u$ has been studied by Gmira and V\'eron \cite{GmV} in the subcritical case (again $1<q<\frac{N+1}{N-1}$) and by Marcus and V\'eron in the supercritical case \cite{MV1}, \cite{MV4}, \cite{MV5}. The case $g(u,\nabla u)=-|\nabla u|^{q}$ was studied by Nguyen Phuoc and V\'eron \cite{NPhV} and extended recently to the case $g(u,\nabla u)=-|\nabla u|^{q_2}|u|^{q_1-1}u$ by Marcus and Nguyen Phuoc \cite{NPhM}. To our knowledge, the problem \eqref{pow2} has not yet been studied.\smallskip

To state our results, let us introduce some notations.  We write $A\lesssim (\gtrsim )B$ if $A\leq  (\geq)C B $ for some $C$ depending on some structural constants,  $ A \asymp B$ if $A\lesssim B\lesssim A$.  Various capacities will be used throughout the paper.  Among them are the Riesz and Bessel capacities  in $\mathbb{R}^{N-1}$  defined respectively by 
  
\begin{align*}
  \operatorname{Cap}_{I_{\gamma},s}(O)=\inf\left\{\int_{\mathbb{R}^{N-1}}f^sdy: f\geq 0, I_\gamma*f\geq \chi_{O} \right\}, 
  \end{align*} 
  \begin{align*}
  \operatorname{Cap}_{G_{\gamma},s}(O)=\inf\left\{\int_{\mathbb{R}^{N-1}}f^sdy: f\geq 0, G_\gamma*f\geq \chi_{O} \right\},
  \end{align*} 
  for any Borel set $O\subset\mathbb{R}^{N-1}$, where $s>1,$ $I_\gamma, G_{\gamma}$ are  the Riesz and  the Bessel kernels in $\mathbb{R}^{N-1}$ with order $\gamma\in (0,N-1)$.  We remark that 
  \begin{align}\label{051220143}
 \operatorname {Cap}_{G_{\gamma},s}(O)\geq\operatorname{Cap}_{I_{\gamma},s}(O)\geq C|O|^{1-\frac{\gamma s}{N-1}}
  \end{align}
  for any Borel set $O\subset \mathbb{R}^{N-1}$ where $\gamma s<N-1$ and $C$ is a positive constant.
  When we consider equations in a bounded smooth domain $\Omega$ in $\mathbb{R}^N$ we use a specific capacity that we define as follows:  there exist open sets $O_1,...,O_m$ in $\mathbb{R}^N$, diffeomorphisms $T_i:O_i\mapsto B_1(0)$ and compact sets $K_1,...,K_m$ in $\partial\Omega$  such that 
  \begin{description}
  \item[a. ] $K_i\subset O_i$,  $\partial\Omega\subset \bigcup\limits_{i = 1}^m K_i$.
  \item[b.] $T_i(O_i\cap \partial\Omega)=B_1(0)\cap \{x_N=0\}$, $T_i(O_i\cap \Omega)=B_1(0)\cap \{x_N>0\}$.
  \item[c.] For any $x\in O_i\cap \Omega$, $\exists y\in O_i\cap\partial\Omega$, $\rho(x)=|x-y|$.
  \end{description} 
  Clearly, $\rho(T_i^{-1}(z))\asymp|z_N|$ for any $z=(z',z_N)\in B_1(0)\cap \{x_N>0\}$ and $|\mathbf{J}_{T_i}(x)|\asymp 1$ for any $x\in  O_i\cap \Omega$, here $\mathbf{J}_{T_i}$ is the Jacobian matrix of $T_i$. \\
  \begin{definition}\label{131120144} Let $\gamma\in (0,N-1), s>1$. We define the $\operatorname{Cap}_{\gamma,s}^{\partial\Omega}$-capacity of a compact set $E\subset \partial\Omega$ by 
  \begin{align*}
  \operatorname{Cap}_{\gamma,s}^{\partial\Omega}(E)=\sum_{i=1}^{m}\operatorname{Cap}_{G_{\gamma},s}(\tilde{T}_i(E\cap K_i)),
  \end{align*}
  where  $T_i(E\cap K_i)=\tilde{T}_i(E\cap K_i)\times\{x_N=0\}$.
  \end{definition}    
  Notice that, if $\gamma s>N-1$ then there exists $C=C(N,\gamma,s,\Omega)>0$ such that \begin{align}\label{051220144}
  \operatorname{Cap}_{\gamma,s}^{\partial\Omega}(\{x\})\geq C
  \end{align} for all $x\in \partial\Omega$. Also the definition does not depend on the choice of the sets $O_i$.\medskip
  
  Our first two theorems give criteria for the solvability of the problem \eqref{121120149b} in $\mathbb{R}^N_+$.
 \begin{theorem}\label{121120142} Let $q>1$ and $\sigma\in \mathfrak{M}_b^+(\mathbb{R}^{N-1})$.  Then,  the following statements are equivalent
   \begin{description}
  \item${\bf 1}$ There exists $C>0$ such that the inequality  
\begin{align}\label{121120143}
\sigma(K)\leq C  \operatorname{Cap}_{I_{\frac{2}{q}},q'}(K)\end{align}
holds for any compact set $K\subset\mathbb{R}^{N-1}$.
   \item${\bf 2}$ There exists $C>0$ such that the relation
 \begin{align}\label{121120144}
\mathbf{G}\left[\left(\mathbf{P}[\sigma]\right)^q\right]\leq C \mathbf{P}[\sigma]<\infty ~~a.e \text{ in }~~ \mathbb{R}^{N}_+
 \end{align}
  holds.\\
    
   \item[3.] The problem 
    \begin{equation}\label{121120145} \begin{array}{lll}
 -\Delta u = u^q ~~&\text{ in }~\mathbb{R}^{N}_+, \\
 \phantom{ -\Delta}
 u = \varepsilon\sigma\quad ~&\text{ in }~\partial\mathbb{R}^{N}_+,\\ 
 \end{array} \end{equation}
   has a positive solution for $\varepsilon>0$ small enough.\medskip\smallskip
    \end{description}  
   \noindent Moreover, there is a constant $C_0>0$ such that if any one
   of the two statement ${\bf 1}$ and ${\bf 2}$  holds with $C\leq C_0$, then equation \eqref{121120145} 
   admits a solution u with $\varepsilon=1$ which satisfies 
   \begin{align}
   u\asymp \mathbf{P}[\sigma].
   \end{align}
   Conversely, if \eqref{121120145} has
   a solution u with $\varepsilon=1$, then the two statements ${\bf 1}$ and ${\bf 2}$ hold for some $C>0$.  
   
 \end{theorem}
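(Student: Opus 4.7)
The plan is to prove the cycle $(1) \Leftrightarrow (2) \Rightarrow (3) \Rightarrow (1)$ and to extract the two-sided bound $u \asymp \mathbf{P}[\sigma]$ directly from the construction. The equivalence $(1) \Leftrightarrow (2)$ is the analytic heart and relies on nonlinear potential theory of Wolff type; $(2) \Rightarrow (3)$ is a monotone iteration; $(3) \Rightarrow (1)$ is a substitution argument using the Poisson representation.

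For $(2) \Rightarrow (3)$, I would run the Picard iteration $u_0 = \varepsilon \mathbf{P}[\sigma]$, $u_{n+1} = \mathbf{G}[u_n^q] + \varepsilon\mathbf{P}[\sigma]$. Under (2) with constant $C$, the inductive ansatz $u_n \le \lambda\varepsilon\mathbf{P}[\sigma]$ is preserved as long as $1 + C\lambda^q \varepsilon^{q-1} \le \lambda$, which is solvable for $\lambda$ close to $1$ provided $C\varepsilon^{q-1}$ is sufficiently small. The sequence is then nondecreasing and uniformly bounded, so monotone convergence yields a very weak solution sandwiched between $\varepsilon\mathbf{P}[\sigma]$ and $\lambda\varepsilon\mathbf{P}[\sigma]$. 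For the \emph{moreover} statement, smallness of $C$ alone makes the inequality solvable with $\varepsilon = 1$, producing a solution satisfying $u \asymp \mathbf{P}[\sigma]$.

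For $(3) \Rightarrow (1)$, assume a positive solution $u$ of \eqref{121120145} with data $\varepsilon\sigma$. Since $u = \mathbf{G}[u^q] + \varepsilon\mathbf{P}[\sigma] \ge \varepsilon\mathbf{P}[\sigma]$, feeding the lower bound back gives $u \ge \varepsilon\mathbf{P}[\sigma] + \varepsilon^q\mathbf{G}[(\mathbf{P}[\sigma])^q]$, and in particular $\mathbf{G}[(\mathbf{P}[\sigma])^q] < \infty$ almost everywhere. To promote finiteness to the pointwise inequality (2), I would follow the Bidaut-V\'eron--Yarur strategy: iterate the lower bound and combine the superadditivity of $t \mapsto t^q$ with the linearity of $\mathbf{G}$ to force $\mathbf{G}[(\mathbf{P}[\sigma])^q] \le C\mathbf{P}[\sigma]$ with $C = C(\varepsilon, q)$. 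Taking $\varepsilon = 1$ yields the converse direction of the moreover clause.

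The heart of the theorem is $(1) \Leftrightarrow (2)$. Using the kernel asymptotics $P(x,y) \asymp x_N(|x'-y|^2 + x_N^2)^{-N/2}$ and $G(x,\xi) \asymp \min\bigl(|x-\xi|^{2-N},\, x_N\xi_N|x-\xi|^{-N}\bigr)$ on $\mathbb{R}^{N}_+$, I would establish a two-sided pointwise equivalence identifying the ratio $\mathbf{G}[(\mathbf{P}[\sigma])^q](x)/\mathbf{P}[\sigma](x)$ with a Wolff-type potential of $\sigma$ on $\mathbb{R}^{N-1}$, with Riesz index $\alpha = 2/q$ and integrability exponent $s = q'$. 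The inequality (2) then reduces to a uniform Wolff potential estimate on $\sigma$, which by the Maz'ya--Hedberg--Wolff characterization of trace inequalities (Adams--Hedberg) is in turn equivalent to the capacity condition (1). The range check $\alpha s = 2/(q-1) < N-1$ matches precisely the super-critical regime $q > (N+1)/(N-1)$ where (1) is a genuine constraint; below it the condition is automatic, recovering the sub-critical setting of Bidaut-V\'eron--Vivier. The main obstacle will be the dyadic decomposition in $x_N$ needed to pass from the composite operator $\mathbf{G}\circ(\mathbf{P}[\cdot])^q$ to a single scale-by-scale Wolff sum on the boundary, together with the bilateral nature of the estimate — the lower bound on $\mathbf{G}[(\mathbf{P}[\sigma])^q]$ in terms of $\mathbf{W}^{\mathbb{R}^{N-1}}_{2/q,q'}\sigma$ is what forces (1) to be necessary.
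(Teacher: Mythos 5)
Your outline of $(2)\Rightarrow(3)$ and of the ``moreover'' clause by monotone iteration is correct and is essentially what the paper's machinery produces. The first genuine gap is in the necessity direction $(3)\Rightarrow(2)$. Iterating $u\ge \mathbf{G}[u^q]+\varepsilon\mathbf{P}[\sigma]$ and using superadditivity only shows that each term of the series of iterated potentials $\varepsilon\mathbf{P}[\sigma]+\varepsilon^q\mathbf{G}[(\mathbf{P}[\sigma])^q]+\cdots$ is finite a.e.; it does not produce the pointwise domination $\mathbf{G}[(\mathbf{P}[\sigma])^q]\le C\,\mathbf{P}[\sigma]$, for which you would need an a priori upper bound on $u$ by a multiple of $\mathbf{P}[\sigma]$ that you do not have. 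The step ``existence of a (super)solution implies the pointwise inequality'' is exactly the hard half of the Kalton--Verbitsky theorem (Theorem \ref{151020148}, ${\bf 1}\Rightarrow 4$), and it is only valid because $1/\mathbf{N}_{2,2}$ is a quasi-metric and the measure $\nu=\rho^{q+1}dx$ satisfies the doubling-type conditions \eqref{1510201410}--\eqref{1510201410*}; the paper verifies these hypotheses in Lemmas \ref{201020141} and \ref{011220141} and then imports the implication as a black box. Your sketch omits this ingredient, so the cycle $(3)\Rightarrow(1)$ is not closed as written.

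The second gap is the claimed pointwise two-sided identification of $\mathbf{G}[(\mathbf{P}[\sigma])^q](x)/\mathbf{P}[\sigma](x)$ with a Wolff-type potential of $\sigma$ on $\mathbb{R}^{N-1}$. No such pointwise identity is proved (or needed) in the paper, and it is doubtful in this form: the ratio depends on the full variable $x=(x',x_N)$ through the nonlinear quantity $\mathbf{P}[\sigma]$ in the denominator. Moreover, uniform boundedness of a Wolff potential of $\sigma$ is not the standard equivalent of the capacitary condition \eqref{121120143}; the equivalent pointwise form on the boundary is the Maz'ya--Verbitsky inequality $I_{2/q}\big[(I_{2/q}[\sigma])^q\big]\le C\, I_{2/q}[\sigma]$, so even the target of your reduction is misstated. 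What the paper actually does is rewrite $u\asymp\mathbf{G}[u^q]+\mathbf{P}[\sigma]$ as $V\asymp \mathbf{N}_{2,2}[\rho^{q+1}V^q]+\mathbf{N}_{2,2}[\sigma]$ with $V=u\rho^{-1}$, apply Theorem \ref{2010201420} (equivalence of solvability, the pointwise inequality, and $\sigma\le C\operatorname{Cap}^{q+1}_{\mathbf{N}_{2,2},q'}$) together with Proposition \ref{2010201434} to absorb the $\asymp$, and then, in a separate purely linear step (Proposition \ref{2010201417}), prove the norm equivalence $\|\mathbf{N}_{2,2}[\mu\otimes\delta_{\{x_N=0\}}]\|_{L^{q'}(\rho^{q+1}dx)}\asymp\|I_{2/q}[\mu]\|_{L^{q'}(\mathbb{R}^{N-1})}$ by Fubini and a Hardy-type estimate, which via the dual definition of capacity \eqref{051220141} yields $\operatorname{Cap}^{q+1}_{\mathbf{N}_{2,2},q'}(K\times\{0\})\asymp\operatorname{Cap}_{I_{2/q},q'}(K)$. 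If you wish to keep your route, the piece you must supply is precisely this dual/energy identification (or an equivalent dyadic argument at the level of $L^{q'}$-norms), not a pointwise one.
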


As a consequence of Theorem \ref{121120142} when $g(u,\nabla u)=|u|^{q-1}u$ ($q>1$) and $\Omega=\mathbb R^{N}_+$, we prove that  if \eqref{pow1}  has a nonnegative solution $u$ with $\sigma\in \mathfrak M^+_b(\mathbb{R}^{N-1})$, then 
 \begin{align}\label{XX}
\sigma(B_r^{'}(y'))\leq C r^{N-\frac{q+1}{q-1}}
\end{align}
 for any ball $B^{'}_r(y')$ in $\mathbb{R}^{N-1}$ where $C=C(q,N)$ and $q>\frac{N+1}{N-1}$; if $1<q\leq \frac{N+1}{N-1}$, then $\sigma\equiv 0$.  Conversely, if $q>\frac{N+1}{N-1}$, $d\sigma=fdz$ for some $f\geq 0$ which satisfies 
 \begin{align}\label{051220142}
 \int_{B_r^{'}(y')}f^{1+\varepsilon} dz\leq Cr^{N-1-\frac{2(\varepsilon+1)}{q-1}}
 \end{align}
  for some $\varepsilon>0$, then there exists a constant $C_0=C_0(N,q)$ such that \eqref{121120149b}  has a nonnegative solution  if  $C\leq C_0$. The above inequality is an analogue of the classical Fefferman-Phong condition \cite{Fe}.  In particular, \eqref{051220142} holds if $f$ belongs to the Lorentz space $L^{\frac{(N-1)(q-1)}{2},\infty}(\mathbb{R}^{N-1})$.\\
  
  We give sufficient conditions for the existence of weak solutions to  \eqref{121120149b} when $g(u,\nabla u)=|u|^{q_1-1}u|\nabla u|^{q_2}$, $q_1,q_2\geq 0$,  $q_1+q_2>1$ and $q_2<2$.\smallskip
%

   \begin{theorem}\label{1311201437}
    Let $q_1,q_2\geq 0,q_1+q_2>1,q_2<2$ and $\sigma\in \mathfrak M(\mathbb{R}^{N-1})$ such that $\mathbf{P}[|\sigma|]<\infty$ a.e. in $\mathbb{R}^{N-1}$. Assume that 
    there exists $C>0$ such that for any Borel set $K\subset\mathbb{R}^{N-1}$ there holds
    \begin{align}\label{1311201438}
|\sigma|(K)\leq C \operatorname{Cap}_{I_{\frac{2-q_2}{q_1+q_2}},(q_1+q_2)'}(K).\end{align}
Then the problem 
    \begin{equation}\label{1311201439}  \begin{array}{lll}
 -\Delta u = |u|^{q_1-1}u|\nabla u|^{q_2} ~~&\text{ in}~\mathbb{R}^{N}_+, \\
 \phantom{ -\Delta }
 u = \varepsilon\sigma\quad ~&\text{ in }~\partial\mathbb{R}^{N}_+,\\ 
 \end{array}\end{equation}
   has a  solution for $\varepsilon>0$ small enough and it satisfies 
   \begin{align}\label{2110201427}
    |u|\lesssim \mathbf{P}[|\sigma|],~~|\nabla u|\lesssim \rho^{-1} \mathbf{P}[|\sigma|]. \end{align}
\end{theorem}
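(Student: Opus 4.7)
My plan is to recast \eqref{1311201439} as a fixed-point equation and solve it via Schauder's theorem on the class
$$
\mathcal{E}_M:=\{v:\ |v|\leq M\varepsilon\mathbf{P}[|\sigma|],\ \rho|\nabla v|\leq M\varepsilon\mathbf{P}[|\sigma|]\},
$$
using the map $T(v):=\mathbf{G}[|v|^{q_1-1}v|\nabla v|^{q_2}]+\varepsilon\mathbf{P}[\sigma]$. For $v\in\mathcal{E}_M$ one has $|v|^{q_1}|\nabla v|^{q_2}\leq (M\varepsilon)^{q_1+q_2}\rho^{-q_2}(\mathbf{P}[|\sigma|])^{q_1+q_2}$, so stability $T(\mathcal{E}_M)\subset\mathcal{E}_M$ reduces to the two weighted pointwise inequalities
$$
\mathbf{G}[\rho^{-q_2}(\mathbf{P}[|\sigma|])^{q_1+q_2}]\lesssim \mathbf{P}[|\sigma|],\qquad \rho\,|\nabla\mathbf{G}[\rho^{-q_2}(\mathbf{P}[|\sigma|])^{q_1+q_2}]|\lesssim \mathbf{P}[|\sigma|].
$$

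The core of the argument is to derive both inequalities from \eqref{1311201438}, extending the implication ${\bf 1}\Rightarrow{\bf 2}$ of Theorem \ref{121120142} to this weighted setting. I would decompose $\mathbf{P}[|\sigma|]$ dyadically in the distance to $\partial\mathbb{R}^N_+$; a direct computation shows that $\mathbf{G}[\rho^{-q_2}\,\cdot\,]$ applied to a single piece behaves like a Riesz convolution of order $2-q_2$ on $\mathbb{R}^{N-1}$, so after raising to power $q_1+q_2$ the Maz'ya--Adams dual characterization of Riesz capacities converts \eqref{1311201438} into a Wolff-potential bound with effective order $(2-q_2)/(q_1+q_2)$, which is exactly what is needed. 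The gradient version rests on the kernel bound $|\nabla_x G(x,y)|\lesssim \min\{|x-y|^{-N+1},\rho(y)|x-y|^{-N}\}$ combined with a split into the regions $|x-y|\leq\rho(x)/2$ and $|x-y|>\rho(x)/2$, which reduces it to the first estimate.

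Granted the two pointwise bounds with an explicit constant $C_0$, I pick $M=2$ and choose $\varepsilon$ small enough that $C_0(2\varepsilon)^{q_1+q_2-1}\leq 1$; this forces $T(\mathcal{E}_M)\subset\mathcal{E}_M$. Equipping $\mathcal{E}_M$ with the $C^1_{\mathrm{loc}}(\mathbb{R}^N_+)$ topology, interior elliptic regularity together with the integrable envelope $(M\varepsilon)^{q_1+q_2}\rho^{-q_2}(\mathbf{P}[|\sigma|])^{q_1+q_2}$ for the nonlinearity give compactness of $\mathcal{E}_M$ and continuity of $T$ by dominated convergence, and Schauder's theorem delivers a fixed point $u$ satisfying \eqref{2110201427}.

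The main obstacle is the pair of weighted capacitary inequalities: the $\rho^{-q_2}$ weight destroys the clean reduction to $\operatorname{Cap}_{I_{2/q},q'}$ available for the pure source problem \eqref{pow1}, so one must honestly re-run the Wolff-potential machinery with the gradient-adjusted order $(2-q_2)/(q_1+q_2)$, and the gradient version is particularly delicate because the commutation of $\nabla$ with $\mathbf{G}$ has to preserve the $\rho^{-1}$ factor uniformly up to the boundary.
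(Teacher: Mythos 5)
Your architecture coincides with the paper's: the paper also runs Schauder's fixed point theorem for $\mathbf{F}(u)=\mathbf{G}[|u|^{q_1-1}u|\nabla u|^{q_2}]+\mathbf{P}[\sigma]$ on the set $\mathbf{E}=\{|u|\leq\lambda\rho\,\mathbf{N}_{1,1}[|\sigma|],\ |\nabla u|\leq\lambda\mathbf{N}_{1,1}[|\sigma|]\}$, which is exactly your $\mathcal{E}_M$ since $\mathbf{P}[|\sigma|]\asymp\rho\,\mathbf{N}_{1,1}[|\sigma|]$. Where you diverge is in how the invariance of this set is obtained. The paper exploits the kernel bounds $\operatorname{G}(x,y)\lesssim\rho(x)\rho(y)\mathbf{N}_{1,1}(x,y)$ and $|\nabla_x\operatorname{G}(x,y)|\lesssim\rho(y)\mathbf{N}_{1,1}(x,y)$: since the gradient bound is exactly $\rho(x)^{-1}$ times the bound on $\operatorname{G}$ itself, your \emph{two} weighted inequalities collapse into the \emph{single} inequality \eqref{2010201448}, namely $\mathbf{N}_{1,1}\bigl[(\mathbf{N}_{1,1}[|\sigma|])^{q_1+q_2}\rho^{q_1+1}\bigr]\lesssim\mathbf{N}_{1,1}[|\sigma|]$, so no separate treatment of the gradient term (your "particularly delicate" step) is needed. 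More importantly, the passage from \eqref{1311201438} to this pointwise inequality is not re-derived by hand: it is precisely the equivalence of statements 3 and 4 of Theorem \ref{2010201420} (the Kalton--Verbitsky theorem for the quasi-metric kernel $\mathbf{N}_{1,1}$ with weight $\rho^{q_1+1}$), combined with Proposition \ref{2010201417} which identifies $\operatorname{Cap}^{q_1+1}_{\mathbf{N}_{1,1},(q_1+q_2)'}(K\times\{0\})$ with $\operatorname{Cap}_{I_{\frac{2-q_2}{q_1+q_2}},(q_1+q_2)'}(K)$ (take $\alpha=\beta=1$, $s=(q_1+q_2)'$, $\alpha_0=q_1+1$, so that $\beta-\alpha+\frac{\alpha_0+1}{s'}-1=\frac{2-q_2}{q_1+q_2}$). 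The smallness of $\varepsilon$ then enters only through the scaling $\sigma\mapsto\varepsilon\sigma$, which multiplies the constant in \eqref{2010201448} by $\varepsilon^{q_1+q_2-1}$.

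The one substantive weakness in your write-up is exactly the step you flag as the main obstacle: "the Maz'ya--Adams dual characterization converts \eqref{1311201438} into a Wolff-potential bound which is exactly what is needed" compresses the hard implication (capacity condition $\Rightarrow$ pointwise iterated-potential bound) into a sentence; this implication is the deep content of Kalton--Verbitsky and is not a formal consequence of dyadic decomposition plus duality. As written it is a gap, though one that the paper's Section 2 is designed to fill, and you have correctly identified the right effective order $\frac{2-q_2}{q_1+q_2}$. Two smaller points: what must be precompact is $T(\mathcal{E}_M)$, not $\mathcal{E}_M$ itself (a set defined by pointwise envelopes is not compact in $C^1_{\mathrm{loc}}$); the paper instead works in the weighted space with norm $\|f\|_{L^{q_1+q_2}(\Omega,\rho^{1-q_2}dx)}+\|\,|\nabla f|\,\|_{L^{q_1+q_2}(\Omega,\rho^{1+q_2}dx)}$, where closedness, convexity and precompactness are checked directly. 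Finally, your invariance inequality should read $C_0M^{q_1+q_2}\varepsilon^{q_1+q_2-1}+1\leq M$, a harmless factor off from what you wrote.
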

    
\begin{remark}  In any case and in view of \eqref{051220143},  if $d\sigma=fdz,$ $f\in L^{\frac{(N-1)(q_1+q_2-1)}{2-q_2},\infty}(\mathbb{R}^{N-1})$  and $(N-1)(q_1+q_2-1)>2-q_2$ then \eqref{1311201438} holds for some $C>0$ and the problem \eqref{1311201439} has a solution for $\varepsilon>0$ small enough. However, we can see that condition \eqref{1311201438}  implies $\mathbf{P}[|\sigma|]<\infty$ a.e, see Theorem \ref{2010201420}.
\end{remark}\medskip
    \smallskip
    
In a bounded domain $\Omega$   we obtain existence results  analogous to Theorem \ref{121120142} and \ref{1311201437} provided the capacities on $\partial\Omega$ set in Definition \ref{131120144} are used instead of the Riesz capacities.
 \begin{theorem}\label{121120147} Let $q>1$, $\Omega\subset\mathbb{R}^N$ be a bounded domain with a $C^2$ boundary  and $\sigma\in \mathfrak{M}^+(\partial\Omega)$.  Then,  the following statements are equivalent:
   \begin{description}
   \item${\bf 1}$ There exists $C>0$ such that the inequality  
 \begin{align}\label{121120146}
   \sigma(K)\leq C \operatorname{Cap}^{\partial\Omega}_{\frac{2}{q},q'}(K)\end{align}
for any Borel set $K\subset\partial\Omega$.
\item${\bf 2}$ There exists $C>0$ such that the inequality
  \begin{align}\label{121120148}
 \mathbf{G}\left[\left(\mathbf{P}[\sigma]\right)^q\right]\leq C \mathbf{P}[\sigma]<\infty ~~a.e \text{ in }~~ \Omega,
  \end{align}
   holds.
    \item[3.] The problem 
\begin{equation}\label{121120149} \begin{array}{lll}
  -\Delta u = u^q ~~&\text{ in}~\Omega, 
  \\\phantom{  -\Delta}
  u = \varepsilon\sigma\quad &\text{ on }~\partial\Omega,\\ 
  \end{array} 
  \end{equation}
    admits a positive solution for $\varepsilon>0$ small enough.
      \end{description}  
    Moreover, there is a constant $C_0>0$ such that if any one
    of the two statements ${\bf1}$ and ${\bf 2}$  holds with $C\leq C_0$, then equation \eqref{121120149} 
    has a solution u with $\varepsilon=1$ which satisfies 
    \begin{align}
    u\asymp \mathbf{P}[\sigma].
    \end{align}
    Conversely, if \eqref{121120149} has
    a solution u with $\varepsilon=1$, the above two statements ${\bf 1}$ and ${\bf 2}$ hold for some $C>0$.
  \end{theorem}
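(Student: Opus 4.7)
The plan is to reduce Theorem \ref{121120147} to its half-space predecessor Theorem \ref{121120142} by straightening $\partial\Omega$ with the charts $(O_i, T_i)$ from Definition \ref{131120144}. Two ingredients drive the reduction: the classical two-sided kernel estimates in a $C^2$ bounded domain,
\begin{equation*}
\operatorname{P}(x,z) \asymp \frac{\rho(x)}{|x-z|^N}, \qquad \operatorname{G}(x,y) \asymp \min\!\left\{|x-y|^{2-N},\, \frac{\rho(x)\rho(y)}{|x-y|^N}\right\},
\end{equation*}
together with the chart bounds $|\mathbf{J}_{T_i}| \asymp 1$ and $\rho(T_i^{-1}(z)) \asymp z_N$. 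These ensure that on each patch $\mathbf{P}$ and $\mathbf{G}$ are pointwise comparable to their half-space analogues acting on the pushforward data.

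\textbf{Step 1: (1) $\Leftrightarrow$ (2).} I would fix a partition of unity $\{\varphi_i\}$ subordinate to $\{K_i\}$ and write $\sigma = \sum_i \sigma_i$ with $\sigma_i = \varphi_i \sigma$. Pushing $\sigma_i$ forward through $T_i$ yields a measure $\tilde{\sigma}_i$ on $\mathbb{R}^{N-1}$. The kernel equivalences above translate \eqref{121120148} restricted to $\Omega \cap O_i$ into the half-space inequality $\mathbf{G}[(\mathbf{P}[\tilde{\sigma}_i])^q] \lesssim \mathbf{P}[\tilde{\sigma}_i]$ in $\mathbb{R}^N_+$, which by Theorem \ref{121120142} is equivalent to $\tilde{\sigma}_i(E) \lesssim \operatorname{Cap}_{I_{2/q}, q'}(E)$ for compact $E \subset \mathbb{R}^{N-1}$. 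Since on bounded sets Bessel and Riesz capacities are comparable, this is precisely \eqref{121120146} read through the chart, and summing over $i$ (with appropriate handling of overlaps) recovers the global equivalence.

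\textbf{Step 2: (2) $\Rightarrow$ (3) via iteration and (3) $\Rightarrow$ (2).} Assuming (2) with constant $C \leq C_0$ for a suitable $C_0>0$, I would set $u_0 = a\, \mathbf{P}[\sigma]$ for an appropriate $a > 1$ and iterate $u_{n+1} = \mathbf{G}[u_n^q] + \mathbf{P}[\sigma]$. Condition \eqref{121120148} gives $u_{n+1} \leq (a^q C + 1)\, \mathbf{P}[\sigma] \leq a\, \mathbf{P}[\sigma]$ once $C_0$ is chosen small enough, so monotone convergence produces a solution with $\varepsilon = 1$ satisfying $u \asymp \mathbf{P}[\sigma]$. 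When only (2) is known with no smallness, the same iteration runs with $u_0 = a \varepsilon \mathbf{P}[\sigma]$ for $\varepsilon$ small. Conversely, if $u$ solves \eqref{121120149} with $\varepsilon = 1$, then $u \geq \mathbf{P}[\sigma]$ and $u \geq \mathbf{G}[u^q] \geq \mathbf{G}[(\mathbf{P}[\sigma])^q]$, establishing the finiteness part of (2); the matching upper bound $u \leq C\, \mathbf{P}[\sigma]$ is obtained \emph{\`a la} \cite{BiYa}, by showing that $v := u/\mathbf{P}[\sigma]$ is essentially bounded through the integral equation satisfied by $v$ and a weak maximum principle.

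\textbf{Main obstacle.} The delicate point is Step 1: one must verify that the chart-based capacity $\operatorname{Cap}^{\partial\Omega}_{2/q, q'}$ of Definition \ref{131120144} is intrinsic to the boundary geometry, i.e.\ independent of the chart family used, and that it exactly controls the trace inequality dual to the Poisson operator $\mathbf{P}$ on $\Omega$. This requires careful bookkeeping of the overlaps between different $T_i$ together with a Whitney-type decomposition of $\Omega$ near $\partial\Omega$ in order to reassemble the local half-space estimates into a clean global statement. Once this comparison is in place, the remaining steps are faithful analogues of those used in the proof of Theorem \ref{121120142}.
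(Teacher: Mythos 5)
Your reduction-to-the-half-space strategy diverges from the paper's route, and it has a genuine gap at the hardest implication. The paper does not localize the PDE at all: it applies the Kalton--Verbitsky theorem (Theorem \ref{151020148}, packaged as Theorem \ref{2010201420}) directly on $X=\overline{\Omega}$ to the model kernel $\mathbf{N}_{2,2}$ with the weight $\rho^{q+1}$, after checking the quasi-metric and doubling hypotheses (Lemmas \ref{201020141} and \ref{011220141}); the charts enter only in Proposition \ref{2010201414}, to identify $\operatorname{Cap}^{q+1}_{\mathbf{N}_{2,2},q'}$ with $\operatorname{Cap}^{\partial\Omega}_{\frac 2q,q'}$ through the dual norm $\|\mathbf{N}_{2,2}[\omega\chi_{K_i}]\|_{L^{q'}(\Omega,\rho^{q+1}dx)}$, and Proposition \ref{2010201434} transfers between the exact Green/Poisson kernels and the comparable model ones. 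Your Step 2 forward direction (iteration under \eqref{121120148} with $C\leq C_0$) is sound and is essentially what Proposition \ref{2010201434} encodes.

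The gap is the implication (3) $\Rightarrow$ (1),(2). You cannot inherit it from Theorem \ref{121120142} by charts, because $T_i$ does not conjugate $-\Delta$ on $\Omega$ to $-\Delta$ on $\mathbb{R}^N_+$: only the two-sided kernel bounds survive the change of variables, so statement (3) --- the existence of an actual solution --- does not transport patchwise, whereas (1) and (2) do. Your fallback, proving $u\leq C\,\mathbf{P}[\sigma]$ by ``a weak maximum principle'' for $v=u/\mathbf{P}[\sigma]$, is not an argument: a solution of $u=\mathbf{G}[u^q]+\mathbf{P}[\sigma]$ need not be comparable to $\mathbf{P}[\sigma]$ (the theorem only asserts comparability after possibly shrinking the data to $C\leq C_0$), and $v$ satisfies $v=1+(\mathbf{P}[\sigma])^{-1}\mathbf{G}[(\mathbf{P}[\sigma]v)^q]$, for which no maximum principle yields boundedness. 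The implication ``existence of a (super)solution implies the pointwise inequality \eqref{121120148} with some $C$'' is precisely the deep content of Theorem \ref{151020148} (statement 1 $\Rightarrow$ 4), resting on the quasi-metric structure and capacitary testing inequalities; you must invoke it (or reprove it), not replace it by a comparison argument. A secondary, fixable issue in Step 1 is that $(\mathbf{P}[\sigma])^q$ does not split as $\sum_i(\mathbf{P}[\sigma_i])^q$ and the Green potential integrates over all of $\Omega$, so cross terms and far-field contributions must be estimated; the paper's global formulation sidesteps this bookkeeping entirely.
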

  From \eqref{051220144}, we see that if $\sigma\in \mathfrak{M}^+(\partial\Omega)$ and $ 1<q<\frac{N+1}{N-1}$,  then \eqref{121120146} holds for some constant $C>0$. Hence, in this case, the problem \eqref{121120149} has a positive solution for $\varepsilon>0$ small enough.
 \begin{theorem}\label{1311201440}
    Let $q_1,q_2\geq 0,q_1+q_2>1,q_2<2,$ $\Omega\subset \mathbb{R}^N$ be a bounded domain with a $ C^2$ boundary and $\sigma\in \mathfrak{M}(\partial\Omega)$. Assume that there exists $C>0$ such that the inequality
    \begin{align}\label{2010201446}
|\sigma|(K)\leq C \operatorname{Cap}^{\partial\Omega}_{\frac{2-q_2}{q_1+q_2},(q_1+q_2)'}(K)\end{align}
holds for any Borel set $K\subset\partial\Omega$. Then the problem 
 \begin{equation}\label{2010201447} \begin{array}{lll}
 -\Delta u = |u|^{q_1-1}u|\nabla u|^{q_2}\qquad&\text{in}~\Omega, \\
 \phantom{ -\Delta}
 u = \varepsilon\sigma&\text{on }~\partial\Omega,\\ 
 \end{array} \end{equation}
   has a  solution for $\varepsilon>0$ small enough which satisfies \eqref{2110201427}.   
    \end{theorem}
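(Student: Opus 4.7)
The plan is to construct a solution of \eqref{2010201447} by a fixed-point iteration of the integral reformulation
\[ u = \mathbf{G}\bigl[\,|u|^{q_1-1}u\,|\nabla u|^{q_2}\,\bigr] + \varepsilon\,\mathbf{P}[\sigma], \]
on a closed convex set of functions with prescribed size and gradient control, mirroring the scheme that yielded Theorem \ref{1311201437} on the half-space but substituting the boundary capacity $\operatorname{Cap}^{\partial\Omega}_{\gamma,s}$ of Definition \ref{131120144} for the Riesz capacity on $\mathbb R^{N-1}$. Concretely, with $M>0$ to be tuned, I would work on
\[ E := \bigl\{\,v \in W^{1,1}_{\mathrm{loc}}(\Omega)\,:\, |v| \le M\,\mathbf{P}[|\sigma|],\ |\nabla v| \le M\,\rho^{-1}\mathbf{P}[|\sigma|]\ \text{a.e.}\,\bigr\}, \]
and study the operator $T(v) := \mathbf{G}\bigl[\,|v|^{q_1-1}v\,|\nabla v|^{q_2}\,\bigr] + \varepsilon\,\mathbf{P}[\sigma]$.

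The analytic core of the argument is the pair of weighted potential inequalities
\[ \mathbf{G}\bigl[\,\rho^{-q_2}(\mathbf{P}[|\sigma|])^{q_1+q_2}\,\bigr] \lesssim \mathbf{P}[|\sigma|], \qquad \bigl|\nabla\mathbf{G}\bigl[\,\rho^{-q_2}(\mathbf{P}[|\sigma|])^{q_1+q_2}\,\bigr]\bigr| \lesssim \rho^{-1}\mathbf{P}[|\sigma|], \]
which should be deduced from the capacitary hypothesis \eqref{2010201446}. The weight $\rho^{-q_2}$ arises because every $v \in E$ satisfies $|v|^{q_1-1}v|\nabla v|^{q_2} \le M^{q_1+q_2}\rho^{-q_2}(\mathbf{P}[|\sigma|])^{q_1+q_2}$. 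The first inequality is the bounded-domain counterpart of what underlies Theorem \ref{1311201437}; its gradient companion should follow from the Green kernel estimate $|\nabla_x G(x,y)| \lesssim \rho(x)^{-1}G(x,y)$ on a $C^2$ domain together with a boundary Harnack comparison, while the Poisson contribution is controlled by the classical bound $|\nabla\mathbf{P}[\sigma]| \lesssim \rho^{-1}\mathbf{P}[\sigma]$.

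Granted these estimates, choosing $M$ slightly larger than the resulting implicit constant and $\varepsilon$ small (depending only on $C$, $q_1$, $q_2$, $\Omega$) ensures $T(E) \subset E$. A fixed point of $T$ is then produced by Schauder's theorem, using the compactness of $f \mapsto \mathbf{G}[f]$ from $L^1_\rho(\Omega)$ into $L^1(\Omega)$ and the continuity of the nonlinearity along sequences dominated by members of $E$; passage to the limit in the very weak formulation yields a solution of \eqref{2010201447}, and \eqref{2110201427} is inherited from membership in $E$. The principal obstacle I anticipate is the transfer of the two weighted potential inequalities from the half-space, where they are accessible through the machinery developed for Theorem \ref{1311201437}, to the curved boundary $\partial\Omega$: since $\operatorname{Cap}^{\partial\Omega}_{\gamma,s}$ is defined only through the charts $(O_i,T_i)$, one must localize via a partition of unity subordinate to $\{O_i\}$, push each piece forward to the half-ball, and exploit the comparabilities $\rho(T_i^{-1}(z)) \asymp |z_N|$ and $|\mathbf{J}_{T_i}| \asymp 1$ together with the two-sided estimates of the Green and Poisson kernels on smooth bounded domains (see \cite{MV5}). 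Once this chartwise reduction and reassembly is carried out, the global inequalities follow and the fixed-point step is routine.
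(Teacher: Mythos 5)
Your proposal follows essentially the same route as the paper: both set up the integral operator $u\mapsto \mathbf{G}\bigl[|u|^{q_1-1}u|\nabla u|^{q_2}\bigr]+\varepsilon\mathbf{P}[\sigma]$ on the convex set of functions with $|u|\lesssim \mathbf{P}[|\sigma|]$ and $|\nabla u|\lesssim \rho^{-1}\mathbf{P}[|\sigma|]$, reduce the invariance of that set to the weighted potential inequality $\mathbf{N}_{1,1}\bigl[(\mathbf{N}_{1,1}[|\sigma|])^{q_1+q_2}\rho^{q_1+1}\bigr]\lesssim \mathbf{N}_{1,1}[|\sigma|]$ (your $\mathbf{G}\bigl[\rho^{-q_2}(\mathbf{P}[|\sigma|])^{q_1+q_2}\bigr]\lesssim\mathbf{P}[|\sigma|]$ in disguise), obtain that inequality from the capacitary hypothesis via the chartwise identification of the boundary capacity (the paper's Theorem \ref{2010201420} combined with Proposition \ref{2010201414}), and conclude by Schauder. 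The only cosmetic differences are your choice of function space for the compactness step and the fact that the paper's kernel framework applies directly on $\overline\Omega$ so that only the capacity comparison, not the potential inequality itself, needs the localization to half-balls.
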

    \begin{remark}  A discussion about the optimality of this condition, as well as the one of Theorem \ref{1311201437}, is conducted in Remark \ref{opt}. We define the subcritical range by
    	\begin{equation}\label{subcri}
    		(N-1)q_1+Nq_2<N+1\quad\text{or equivalently }\;(N-1)(q_1+q_2-1)<2-q_2.
    	\end{equation}
If we assume that we are in the subcritical case, then problem \eqref{2010201447} has a solution for any measure $\sigma\in \mathfrak M_b(\partial\Omega)$ and 
    	$\varepsilon>0$ small enough. 
    \end{remark}
 \section{Integral equations  } Let  $\Omega$ be either $\mathbb{R}^{N-1}\times (0,\infty)$ or $\Omega$  a bounded domain in $\mathbb{R}^N$ with  a $C^2$ boundary $\partial\Omega$.  For $0\leq \alpha\leq \beta <N$, we denote  
 \begin{align}
 \mathbf{N}_{\alpha,\beta}(x,y)=\frac{1}{|x-y|^{N-\beta}\max\left\{|x-y|,\rho(x),\rho(y)\right\}^\alpha}\qquad\forall (x,y)\in \overline{\Omega}\times\overline{\Omega}.   
 \end{align} We set
 \begin{align*}
 \mathbf{N}_{\alpha,\beta}[\nu
 ](x)=\int_{\overline{\Omega}}\mathbf{N}_{\alpha,\beta}(x,y)d\nu(y)\qquad\forall \nu\in\mathfrak{M}^+(\overline{\Omega}),
 \end{align*}
 and denote $\mathbf{N}_{\alpha,\beta}[f]:=\mathbf{N}_{\alpha,\beta}[fdx]$ if $f\in L^{1}_{loc}(\Omega),~f\geq 0$. \smallskip
 
 In this section, we are interested in the solvability of the following integral equations
 \begin{align}
 U=\mathbf{N}_{\alpha,\beta}\left[U^q(\rho(.))^{\alpha_0}\right]+\mathbf{N}_{\alpha,\beta}[\omega]
 \end{align}  where $\alpha_0\geq 0$ and $\omega \in\mathfrak{M}^+(\overline{\Omega})$. \\
 
 We follow the deep ideas developed by Kalton and Verbitsky in \cite{KaVe} who analyzed a PDE problem under the form of an integral equation.
 They proved a certain number of properties of this integral equation which are crucial for our study and, for the sake of completeness, we recall them here.
 Let $X$ be a metric space and $\nu\in \mathfrak{M}^+(X)$. Let $\mathbf{K}$ be a Borel  positive kernel function $\mathbf{K}:X\times X\mapsto (0,\infty]$ such that $\mathbf{K}$ is symmetric and $\mathbf{K}^{-1}$ satisfies a quasi-metric
   inequality, i.e. there is a constant $C\geq 1$ such that for all
   $x,y,z\in X$ we have
   \begin{align*}
   \frac{1}{\mathbf{K}(x,y)}\leq C\left(\frac{1}{\mathbf{K}(x,z)}+\frac{1}{\mathbf{K}(z,y)}\right).
   \end{align*}
   Under these conditions, we can define  the quasi-metric $d$ by
   $$d(x,y)=\frac{1}{\mathbf{K}(x,y)},$$
and denote by    $\mathbb{B}_{r}(x)=\{y\in X\!:d(x,y)<r\}$ the open $d$-ball of radius $r>0$ and center $x$. Note that this set  can be empty. \smallskip

   For $\omega\in\mathfrak{M}^+(X)$, we define the potentials $\mathbf{K}\omega$ and $\mathbf{K}^{\nu}f$ by 
  
   $$\mathbf{K}\omega(x)=\int_{X}\mathbf{K}(x,y)d\omega(y),~~\mathbf{K}^{\nu}f(x)=\int_{X}\mathbf{K}(x,y)f(y)d\nu(y),$$    
  and for $q>1$,  the capacity 
$\operatorname{Cap}^\nu_{\mathbf{K},q'}$ in $X$  by 
  \begin{align*}
  \operatorname{Cap}^\nu_{\mathbf{K},q'}(E)=\inf\left\{\int_{X}g^{q'}d\nu: g\geq 0 , \mathbf{K}^\nu g\geq \chi_E\right\},
  \end{align*} 
   for any Borel set $E\subset X$. 
  \begin{theorem}[\cite{KaVe}] \label{151020148} Let $q>1$ and $\nu, \omega\in\mathfrak{M}^+(X)$ such that
    \begin{align}\label{1510201410}
 &~~~~~~~~~ \int_{0}^{2r}\frac{\nu(\mathbb{B}_s(x))}{s}\frac{ds}{s}\leq C \int_{0}^{r}\frac{\nu(\mathbb{B}_s(x))}{s}\frac{ds}{s},\\[4mm]
 &
 \sup_{y\in \mathbb{B}_r(x)}\int_{0}^{r}\frac{\nu(\mathbb{B}_s(y))}{s}\frac{ds}{s}\leq C \int_{0}^{r}\frac{\nu(\mathbb{B}_s(x))}{s}\frac{ds}{s},\label{1510201410*}
  \end{align}
   for any $r>0,x\in X$, where $C>0$ is a constant.   Then the following statements are equivalent:
  \begin{description}
  \item${\bf 1}$ The equation $u= \mathbf{K}^\nu u^q+\varepsilon \mathbf{K}\omega$ has a solution  for some $\varepsilon>0$.
  
  \item${\bf 2}$  The inequality 
  \begin{align}\label{151020143}
   \int_{E}(\mathbf{K}\omega_E)^{q}d\sigma\leq C\omega(E)
   \end{align}
  holds   for any Borel set $E\subset X$ where $\omega_E=\chi_E\omega$. 
   \item[3.] For any Borel set $E\subset X$, there holds
   \begin{align}\label{1510201414}
\omega(E)\leq C \operatorname{Cap}^\nu_{\mathbf{K},q'}(E).
\end{align}
  \item[4.] The inequality 
  \begin{align}\label{151020144}
  \mathbf{K}^\nu\left(\mathbf{K}\omega\right)^q\leq C \mathbf{K}\omega<\infty ~~\nu-a.e.
  \end{align}
  holds.
  \end{description}
   \end{theorem}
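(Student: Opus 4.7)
The plan is to prove the circle of implications $(4)\Rightarrow(2)\Rightarrow(3)\Rightarrow(4)$, and then $(4)\Leftrightarrow(1)$. The first step $(4)\Rightarrow(2)$ is immediate: integrating $(4)$ against $d\omega_E$ on $E$ and using Fubini with the symmetric kernel $\mathbf K$ gives $\int_E(\mathbf K\omega_E)^q\,d\nu\le C\omega(E)$. The step $(2)\Rightarrow(3)$ is classical duality; by a Minimax/Hahn--Banach argument one has
\[
\operatorname{Cap}^\nu_{\mathbf K,q'}(E)^{1/q'}=\sup\bigl\{\mu(E):\mu\in\mathfrak M^+(E),\ \|\mathbf K\mu\|_{L^q(\nu)}\le 1\bigr\},
\]
and applying this with $\mu=\omega_E$ yields $\omega(E)^q\le \operatorname{Cap}^\nu_{\mathbf K,q'}(E)^{q-1}\|\mathbf K\omega_E\|_{L^q(\nu)}^q\le C\operatorname{Cap}^\nu_{\mathbf K,q'}(E)^{q-1}\omega(E)$, i.e.\ $\omega(E)\le C^{1/(q-1)}\operatorname{Cap}^\nu_{\mathbf K,q'}(E)$.

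The heart of the theorem is $(3)\Rightarrow(4)$. Using the quasi-metric $d=1/\mathbf K$ together with the doubling hypotheses \eqref{1510201410}--\eqref{1510201410*}, one builds a Christ-type dyadic lattice $\mathcal D$ on $X$ with cells respecting $\nu$. The main technical tool is a Wolff-type equivalence relating the continuous potential $\mathbf K^\nu(\mathbf K\omega)^q$ to a discrete dyadic Wolff potential built from the masses $\omega(Q),\nu(Q)$ with $Q\in\mathcal D$, coupled with a pointwise bound $\mathbf K^\nu(\mathbf K\omega)^q(x)\lesssim \mathbf K\omega(x)$ that holds as soon as $(3)$ is known on every cell of $\mathcal D$. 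Concretely, one decomposes $\mathbf K^\nu(\mathbf K\omega)^q(x)$ according to the scales of $\mathcal D$ containing $x$, applies the capacity bound $(3)$ cell-by-cell (in the form of the Minimax duality above), and uses the doubling property to compare $\nu$-masses on balls of comparable radii, summing the resulting geometric series.

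For $(4)\Rightarrow(1)$ I would run the monotone iteration $u_0\equiv 0$, $u_{n+1}=\mathbf K^\nu u_n^q+\varepsilon\mathbf K\omega$. The inductive hypothesis $u_n\le M\mathbf K\omega$ combined with $(4)$ gives $u_{n+1}\le(CM^q+\varepsilon)\mathbf K\omega$, and $M=2\varepsilon$ is a fixed point as soon as $C(2\varepsilon)^{q-1}\le 1/2$, i.e.\ $\varepsilon$ is small enough. Monotone convergence yields a solution $u\asymp\mathbf K\omega$. Conversely, $(1)\Rightarrow(4)$ follows because a solution satisfies $u\ge\varepsilon\mathbf K\omega$ and $u\ge\mathbf K^\nu u^q$, hence $\varepsilon^q\mathbf K^\nu(\mathbf K\omega)^q\le u$; the matching upper bound $u\le C'\mathbf K\omega$ needed to close the loop is obtained by substituting the fixed-point equation into itself once and re-invoking the capacity-Wolff characterization in the other direction.

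The main obstacle is $(3)\Rightarrow(4)$, that is, the pointwise Wolff estimate on an abstract quasi-metric measure space. Unlike the Euclidean Riesz-kernel setting, where Hedberg--Wolff is classical, everything here must be routed through Christ's dyadic lattice and the twin doubling hypotheses, with the quasi-metric constant entering multiplicatively at every scale. Carrying this constant through the $q$-power iteration, in such a way that the discrete dyadic capacity condition still dominates the continuous potential $\mathbf K^\nu(\mathbf K\omega)^q$ pointwise by $\mathbf K\omega$, is the technical core of Kalton--Verbitsky's argument.
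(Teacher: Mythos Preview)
The paper does not prove this theorem at all: it is quoted verbatim from Kalton--Verbitsky \cite{KaVe} and used as a black box, with the paper's own work consisting only of verifying (in Lemmas~\ref{201020141} and~\ref{011220141}) that the concrete kernel $\mathbf N_{\alpha,\beta}$ and the measure $d\nu=(\rho(x))^{\alpha_0}\chi_\Omega\,dx$ satisfy the quasi-metric and doubling hypotheses \eqref{1510201410}--\eqref{1510201410*}. There is therefore nothing in the paper to compare your proposal against.

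That said, as a sketch of the Kalton--Verbitsky argument your outline is broadly accurate in identifying $(3)\Rightarrow(4)$ as the core step, handled via dyadic decompositions and a Wolff-type inequality on the quasi-metric space, and $(4)\Leftrightarrow(1)$ via monotone iteration. A couple of imprecisions are worth flagging. First, your ``immediate'' argument for $(4)\Rightarrow(2)$ does not work as written: inequality $(4)$ is stated for the fixed measure $\omega$, not for its restrictions $\omega_E$, so integrating $(4)$ against $d\omega_E$ and using Fubini yields $\int(\mathbf K\omega)^q\,\mathbf K\omega_E\,d\nu\le C\int_E \mathbf K\omega\,d\omega$, which is neither the left- nor the right-hand side of $(2)$. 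In Kalton--Verbitsky the testing condition $(2)$ is obtained along a different path (typically via $(1)$ or via the discrete characterization). Second, your closing of the loop $(1)\Rightarrow(4)$ assumes an upper bound $u\le C'\mathbf K\omega$ for the solution, which does not follow from the equation alone and in fact already requires the Wolff-type machinery; the actual argument in \cite{KaVe} proceeds differently.
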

We check below that $N_{\alpha,\beta}$ satisfies all assumptions of $\mathbf{K}$ in Theorem \ref{151020148}.                                        
    \begin{lemma}\label{201020141}
$\mathbf{N}_{\alpha,\beta}$ is symmetric and  satisfies the quasi-metric inequality.
   \end{lemma}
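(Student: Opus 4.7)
The symmetry of $\mathbf{N}_{\alpha,\beta}$ is immediate from its definition, since both $|x-y|$ and $\max\{|x-y|,\rho(x),\rho(y)\}$ are symmetric functions of the pair $(x,y)$. The real content of the lemma is therefore the quasi-metric inequality: producing a constant $C=C(N,\alpha,\beta)\geq 1$ such that, writing $M(u,v):=\max\{|u-v|,\rho(u),\rho(v)\}$, one has
$$|x-y|^{N-\beta}M(x,y)^\alpha \;\leq\; C\bigl(|x-z|^{N-\beta}M(x,z)^\alpha+|z-y|^{N-\beta}M(z,y)^\alpha\bigr)$$
for every $x,y,z\in\overline{\Omega}$.

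My plan is to prove the sharper one-sided statement that one of the two terms on the right already dominates the left. By the symmetry in $(x,y)$, I may assume $|x-z|\leq |z-y|$; the ordinary triangle inequality then gives $|x-y|\leq 2|z-y|$, so the prefactor $|x-y|^{N-\beta}$ is controlled by $2^{N-\beta}|z-y|^{N-\beta}$. The heart of the argument is the pointwise comparison
$$M(x,y)\;\leq\; 2\,M(z,y),$$
under the same assumption. I would verify this via three elementary bounds: $|x-y|\leq 2|z-y|\leq 2M(z,y)$; $\rho(y)\leq M(z,y)$; and, crucially,
$$\rho(x)\;\leq\; \rho(z)+|x-z|\;\leq\; M(z,y)+|z-y|\;\leq\; 2M(z,y),$$
where the first inequality uses that $\rho=\operatorname{dist}(\cdot,\partial\Omega)$ is $1$-Lipschitz on $\overline{\Omega}$.

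Combining these two comparisons yields
$$|x-y|^{N-\beta}M(x,y)^\alpha\;\leq\; 2^{N-\beta+\alpha}\,|z-y|^{N-\beta}M(z,y)^\alpha,$$
which is controlled by the second term on the right-hand side alone, so the quasi-metric inequality holds with $C=2^{N-\beta+\alpha}$. There is no genuine obstacle here: the only tool beyond the triangle inequality is the $1$-Lipschitz property of $\rho$, which makes the argument insensitive to the choice of $\overline{\Omega}$ (either the half-space $\mathbb{R}^{N-1}\times(0,\infty)$ or a bounded $C^2$ domain).
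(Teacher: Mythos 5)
Your proof is correct, and it is organized differently from the paper's. The paper splits $\max\{|x-y|,\rho(x),\rho(y)\}^{\alpha}$ into the three contributions $|x-y|^{\alpha}$, $\rho(x)^{\alpha}$, $\rho(y)^{\alpha}$, treats the first via the sub-additivity of $t\mapsto t^{N-\beta+\alpha}$ together with the elementary bound $|x-z|^{N-\beta+\alpha}\leq |x-z|^{N-\beta}M(x,z)^{\alpha}$, and treats the other two by first replacing $\max\{\rho(x),\rho(y)\}$ with $\min\{\rho(x),\rho(y)\}+|x-y|$ (the $1$-Lipschitz property of $\rho$) and then distributing $|x-y|^{N-\beta}\lesssim |x-z|^{N-\beta}+|z-y|^{N-\beta}$ so that each piece is absorbed into $1/\mathbf{N}_{\alpha,\beta}(x,z)$ or $1/\mathbf{N}_{\alpha,\beta}(z,y)$. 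You instead prove the stronger one-sided statement that, after the symmetric reduction $|x-z|\leq |z-y|$, the single term $|z-y|^{N-\beta}M(z,y)^{\alpha}$ already dominates, via the clean comparison $M(x,y)\leq 2M(z,y)$; the only inputs are the triangle inequality, the $1$-Lipschitz property of $\rho$ in the form $\rho(x)\leq\rho(z)+|x-z|$, and the sign conditions $\alpha\geq 0$, $\beta<N$ which make both exponents monotone. Your route is shorter, yields the explicit constant $2^{N-\beta+\alpha}$, and avoids the max/min splitting and the power-sum inequalities; the paper's decomposition is less sharp but is the more standard way such quasi-metric estimates are verified term by term. Both arguments are valid for either choice of $\Omega$.
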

\begin{proof} Clearly, $\mathbf{N}_{\alpha,\beta}$ is symmetric. Now we check the quasi-metric inequality associated to $\mathbf{N}_{\alpha,\beta}$ and $X=\overline{\Omega}$.  For any $x,z,y\in \overline{\Omega}$ such that $x\not= y\not= z$, we have
\begin{align*}
|x-y|^{N-\beta+\alpha}&\lesssim |x-z|^{N-\beta+\alpha}+|z-y|^{N-\beta+\alpha}\\&\lesssim \frac{1}{\mathbf{N}_{\alpha,\beta}(x,z)}+\frac{1}{\mathbf{N}_{\alpha,\beta}(z,y)}.
\end{align*}    
   Since $|\rho(x)-\rho(y)|\leq |x-y|$, there holds 
\begin{align*}
   & |x-y|^{N-\beta}(\rho(x))^\alpha+|x-y|^{N-\beta}(\rho(y))^\alpha\lesssim |x-y|^{N-\beta}(\min\{\rho(x),\rho(y)\})^\alpha+|x-y|^{N-\beta+\alpha}\\&~~~~~~~\lesssim \left(|x-z|^{N-\beta}+|z-y|^{N-\beta}\right)(\min\{\rho(x),\rho(y)\})^\alpha+|x-z|^{N-\beta+\alpha}+|z-y|^{N-\beta+\alpha}\\&~~~~~~\lesssim\left((\rho(x))^\alpha|x-z|^{N-\beta}+|x-z|^{N-\beta+\alpha}\right)+\left((\rho(y))^\alpha|z-y|^{N-\beta}+|z-y|^{N-\beta+\alpha}\right)\\&~~~~~~~\lesssim \frac{1}{\mathbf{N}_{\alpha,\beta}(x,z)}+\frac{1}{\mathbf{N}_{\alpha,\beta}(z,y)}.
   \end{align*}Thus,   
  \begin{align*}
 \frac{1}{\mathbf{N}_{\alpha,\beta}(x,y)} \lesssim \frac{1}{\mathbf{N}_{\alpha,\beta}(x,z)}+\frac{1}{\mathbf{N}_{\alpha,\beta}(z,y)}.
  \end{align*}   
   \end{proof}\medskip
   
   Next we give sufficient conditions for \eqref{1510201410}, \eqref{1510201410*} to hold, in view of the applications that we develop in Sections 3 and 4.
    \begin{lemma}\label{011220141} If $d\nu(x)= (\rho(x))^{\alpha_0}\chi_\Omega dx$ with $\alpha_0\geq 0$, then \eqref{1510201410} and \eqref{1510201410*} hold. 
    
    \end{lemma}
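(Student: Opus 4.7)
The strategy is to first pin down the geometry of the quasi-balls $\mathbb{B}_{s}(x)$ associated with the kernel $\mathbf{N}_{\alpha,\beta}$, then compute the $\nu$-measure of these balls as an explicit power of $s$ (depending on the regime), and finally feed these formulas into the two Wolff-type integral inequalities.

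\textbf{Step 1: Simplify the quasi-distance.} The inequality $|\rho(x)-\rho(y)|\leq|x-y|$ gives $\rho(y)\leq\rho(x)+|x-y|$, hence
\[
\max\{|x-y|,\rho(x),\rho(y)\}\asymp \max\{|x-y|,\rho(x)\},
\]
so $d(x,y):=\mathbf{N}_{\alpha,\beta}(x,y)^{-1}\asymp|x-y|^{N-\beta}\max\{|x-y|,\rho(x)\}^{\alpha}$. Setting $s_{0}(x):=\rho(x)^{N-\beta+\alpha}$, I obtain two regimes:
\begin{itemize}
\item \emph{Interior regime} $s\leq s_{0}(x)$: $\mathbb{B}_{s}(x)$ is comparable to the Euclidean ball $B_{r_{1}}(x)$ with $r_{1}=(s/\rho(x)^{\alpha})^{1/(N-\beta)}\leq\rho(x)$, sitting in the region where $\rho\asymp\rho(x)$.
\item \emph{Boundary regime} $s\geq s_{0}(x)$: $\mathbb{B}_{s}(x)\cap\overline{\Omega}$ is comparable to $B_{r_{2}}(x)\cap\overline{\Omega}$ with $r_{2}=s^{1/(N-\beta+\alpha)}\geq\rho(x)$.
\end{itemize}

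\textbf{Step 2: Compute $\nu(\mathbb{B}_{s}(x))$.} In the interior regime, $\rho\asymp\rho(x)$ on $\mathbb{B}_{s}(x)$, so
\[
\nu(\mathbb{B}_{s}(x))\asymp\rho(x)^{\alpha_{0}}r_{1}^{N}\asymp\rho(x)^{\alpha_{0}-\alpha N/(N-\beta)}\,s^{N/(N-\beta)}.
\]
In the boundary regime, a direct integration of $\rho^{\alpha_{0}}$ on $B_{r_{2}}(x)\cap\Omega$ (using that $r_{2}\gtrsim\rho(x)$, so the ball extends a Euclidean distance $\asymp r_{2}$ from $\partial\Omega$) gives $\nu(\mathbb{B}_{s}(x))\asymp r_{2}^{N+\alpha_{0}}\asymp s^{(N+\alpha_{0})/(N-\beta+\alpha)}$. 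This is valid as long as $r_{2}$ is smaller than the diameter of $\Omega$; for larger $s$ in the bounded case, $\nu(\mathbb{B}_{s}(x))$ is constant equal to $\nu(\Omega)$, which only helps in verifying the inequalities.

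\textbf{Step 3: Verify \eqref{1510201410}.} In each regime the function $s\mapsto\nu(\mathbb{B}_{s}(x))$ is a pure power $s^{\tau}$ with exponent $\tau_{1}=N/(N-\beta)$ or $\tau_{2}=(N+\alpha_{0})/(N-\beta+\alpha)$; the hypotheses $0\leq\alpha\leq\beta<N$ and $\alpha_{0}\geq 0$ force both exponents to be $>1$. A direct split of $\int_{0}^{2r}\nu(\mathbb{B}_{s}(x))s^{-2}ds$ at $s_{0}(x)$ (and possibly at the diameter) shows the integral is, up to a constant, equal to its value at $s=r$, i.e.\ $\asymp\nu(\mathbb{B}_{r}(x))/r$. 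Since the same is true for the integral up to $r$, the doubling inequality \eqref{1510201410} follows.

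\textbf{Step 4: Verify \eqref{1510201410*}.} Let $y\in\mathbb{B}_{r}(x)$ and $s\leq r$. The quasi-metric inequality shown in Lemma~\ref{201020141} gives $\mathbb{B}_{s}(y)\subset\mathbb{B}_{C(s+r)}(x)\subset\mathbb{B}_{2Cr}(x)$, hence $\nu(\mathbb{B}_{s}(y))\leq\nu(\mathbb{B}_{2Cr}(x))$. Combined with \eqref{1510201410} iterated a bounded number of times, $\nu(\mathbb{B}_{2Cr}(x))\lesssim\nu(\mathbb{B}_{r}(x))$. One then needs a pointwise comparison $\nu(\mathbb{B}_{s}(y))\lesssim\nu(\mathbb{B}_{s}(x))$ for small $s$: this I obtain by treating separately the cases $\rho(y)\asymp\rho(x)$ (which holds automatically when $|x-y|\leq\rho(x)/2$, and uses the explicit formulas of Step~2) and $\rho(y)\ll\rho(x)$ (which forces $d(x,y)\gtrsim\rho(x)^{N-\beta+\alpha}$, so only the boundary regime is relevant and the estimate of $\nu(\mathbb{B}_{s})$ becomes independent of the base point). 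Plugging this into $\int_{0}^{r}\nu(\mathbb{B}_{s}(y))s^{-2}ds$ and using Step~3 yields \eqref{1510201410*}.

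\textbf{Main obstacle.} The delicate point is the uniformity condition \eqref{1510201410*} when $y$ lies much closer to $\partial\Omega$ than $x$. In that case $\rho(y)$ is small and a naive comparison of $\nu(\mathbb{B}_{s}(y))$ and $\nu(\mathbb{B}_{s}(x))$ can go the wrong way, depending on the sign of the exponent $\alpha_{0}-\alpha N/(N-\beta)$ appearing in Step~2. The way out is that $d(x,y)<r$ forces $s_{0}(y)\lesssim s_{0}(x)+r$, so the interior regime of $y$ is already absorbed into the boundary regime, where the estimate of $\nu(\mathbb{B}_{s})$ no longer depends on the base point and the comparison with $\nu(\mathbb{B}_{r}(x))$ becomes clean.
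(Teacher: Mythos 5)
Your proposal is correct and follows essentially the same route as the paper: sandwich the quasi-balls $\mathbb{B}_s(x)$ between Euclidean balls, derive the two-regime power formula for $\nu(\mathbb{B}_s(x))$ according to whether $s\lessgtr\rho(x)^{N-\beta+\alpha}$, evaluate the Wolff-type integral explicitly, and prove \eqref{1510201410*} by the same dichotomy ($\rho(y)\asymp\rho(x)$ when $r$ is small relative to $\rho(x)^{N-\beta+\alpha}$, boundary regime otherwise). The only presentational caveat is that the all-scales pointwise comparison $\nu(\mathbb{B}_s(y))\lesssim\nu(\mathbb{B}_s(x))$ you first aim for in Step~4 can indeed fail, but the repair you describe (reduce, via Step~3, to a single-scale comparison $\nu(\mathbb{B}_r(y))\lesssim\nu(\mathbb{B}_{2Cr}(x))\lesssim\nu(\mathbb{B}_r(x))$) is exactly what is needed and matches the paper's case analysis.
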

\begin{proof} It is easy to see that for any $x\in \overline{\Omega},~s>0$
\begin{align}
B_{2^{-\frac{\alpha+1}{N-\beta}}S}(x)\cap\overline{\Omega}\subset \mathbb{B}_s(x)\subset B_{S}(x)\cap\overline{\Omega},
\end{align}
    with  $S=\min\{s^{\frac{1}{N-\beta+\alpha}},s^{\frac{1}{N-\beta}}(\rho(x))^{-\frac{\alpha}{N-\beta}}\}$ and $\mathbb{B}_s(x)=\overline{\Omega}$ when $s>2^{\frac{\alpha N}{N-\alpha}}(diam\,(\Omega))^N$.\\ 
We show that    for any $0\leq s<8diam\,(\Omega)$, $x\in\overline{\Omega}$ 
  \begin{align}\label{161020147}
  \nu(B_s(x))\asymp (\max\{\rho(x),s\})^{\alpha_0} s^N.
  \end{align}Indeed, take $0\leq s<8diam\,(\Omega)$, $x\in\overline{\Omega}$. 
  There exist  $\varepsilon=\varepsilon(\Omega)\in (0,1)$ and $x_s\in \Omega$ such that  $B_{\varepsilon s}(x_s)\subset B_s(x)\cap \Omega$ and $\rho(x_s)>\varepsilon s$. \smallskip

\noindent  (a) If $0\leq s\leq \frac{\rho(x)}{4}$, so for any $y\in B_s(x)$, $
 \rho(y)\asymp \rho(x). 
$
Thus we obtain \eqref{161020147} because 
$$\nu(B_s(x))\asymp (\rho(x))^{\alpha_0} |B_s(x)\cap\Omega|\asymp (\rho(x))^{\alpha_0} s^N.$$ 

\noindent (b)  If $s>\frac{\rho(x)}{4}$, since $\rho(y)\leq \rho(x)+|x-y|<5s$ for any $y\in B_s(x)$, there holds $\nu(B_s(x))\lesssim s^{N+\alpha_0}$ and we have the following dichotomy: \smallskip

\noindent(b.1) either $s\leq 4\rho(x) $, then 
   $$\nu(B_s(x))\gtrsim \nu(B_{\frac{\rho(x)}{4}}(x))\asymp (\rho(x))^{\alpha_0+N} \gtrsim s^{N+\alpha_0} ;$$
   
    (b.2)   or $s\geq 4\rho(x)$, 
 we have for any $y\in B_{\varepsilon s/2}(x_s)$, $\rho(y)\geq -|y-x_s|+\rho(x_s)>\varepsilon s/2 $. It follows 
\begin{align*}
\nu(B_s(x))\gtrsim \nu(B_{\varepsilon s/2}(x_s))\gtrsim  s^{N+\alpha_0}.
\end{align*}
Therefore \eqref{161020147} holds.\smallskip

\noindent Next,  for any $0\leq s<2^{\frac{(\alpha+1)(N-\beta+\alpha)}{N-\beta}}(diam\,(\Omega))^{N-\beta+\alpha}$ and $x\in\overline{\Omega}$,  we have 
 \begin{align*}
    \nu(\mathbb{B}_s(x))&\asymp (\max\{\rho(x),\min\{s^{\frac{1}{N-\beta+\alpha}},s^{\frac{1}{N-\beta}}(\rho(x))^{-\frac{\alpha}{N-\beta}}\}\})^{\alpha_0} \\&~~\times\left(\min\{s^{\frac{1}{N-\beta+\alpha}},s^{\frac{1}{N-\beta}}(\rho(x))^{-\frac{\alpha}{N-\beta}}\}\right)^N\\&\asymp \left\{ \begin{array}{l}
s^{\frac{\alpha_0+N}{N-\beta+\alpha}}  ~~~~~~~~~~~~~~~~~\text{if}~\rho(x)\leq s^{\frac{1}{N-\beta+\alpha}}, \\
(\rho(x))^{\alpha_0-\frac{\alpha N}{N-\beta}}s^{\frac{N}{N-\beta}}~ ~\text{if }~\rho(x)\geq  s^{\frac{1}{N-\beta+\alpha}},\\ 
  \end{array} \right.
\end{align*}
and $ \nu(\mathbb{B}_s(x))=\nu(\overline{\Omega})\asymp (diam\,(\Omega))^{\alpha_0+N}$ if $s>2^{\frac{(\alpha+1)(N-\beta+\alpha)}{N-\beta}}(diam\,(\Omega))^{N-\beta+\alpha}$.
We get, 
\begin{align*}
\int_{0}^{r}\frac{\nu(\mathbb{B}_s(x))}{s}\frac{ds}{s}\asymp \left\{ \begin{array}{l}
(diam\,(\Omega))^{\alpha_0+\beta-\alpha}  ~~~\text{if}~r> (diam\,(\Omega))^{N-\beta+\alpha}, \\ 
  r^{\frac{\alpha_0+\beta-\alpha}{N-\beta+\alpha}}  ~~~~~~~~~~~~~~~~\text{if}~r\in ((\rho(x))^{N-\beta+\alpha},(diam\,(\Omega))^{N-\beta+\alpha}], \\
(\rho(x))^{\alpha_0-\frac{\alpha N}{N-\beta}}r^{\frac{\beta}{N-\beta}}~ ~\text{if }~r\in (0,(\rho(x))^{N-\beta+\alpha}].\\ 
    \end{array} \right.
\end{align*}
Therefore \eqref{1510201410} holds. It remains to prove  \eqref{1510201410*}. For any $x\in \overline{\Omega}$ and $r>0$,  it is clear that if $r>\frac{1}{2}(\rho(x))^{N-\beta+\alpha}$ we have  
   \begin{align*}
\sup_{y\in \mathbb{B}_r(x)}\int_{0}^{r}\frac{\nu(\mathbb{B}_s(y))}{s}\frac{ds}{s}\lesssim \min\{r^{\frac{\alpha_0+\beta-\alpha}{N-\beta+\alpha}},(diam\,(\Omega))^{\alpha_0+\beta-\alpha}  \},
\end{align*} 
from which inequality   we obtain 
\begin{align*}
 \sup_{y\in \mathbb{B}_r(x)}\int_{0}^{r}\frac{\nu(\mathbb{B}_s(y))}{s}\frac{ds}{s}\lesssim \int_{0}^{r}\frac{\nu(\mathbb{B}_s(x))}{s}\frac{ds}{s}.
 \end{align*} 
   If $0<r\leq \frac{1}{2}(\rho(x))^{N-\beta+\alpha}$, we have $\mathbb{B}_r(x)\subset B_{r^{\frac{1}{N-\beta}}(\rho(x))^{-\frac{\alpha}{N-\beta}}}(x)$  and $\rho(x)\asymp\rho(y)$ for all $y\in   B_{r^{\frac{1}{N-\beta}}(\rho(x))^{-\frac{\alpha}{N-\beta}}}(x)$,  thus
\begin{align*}
  \sup_{y\in \mathbb{B}_r(x)}\int_{0}^{r}\frac{\nu(\mathbb{B}_s(y))}{s}\frac{ds}{s}&\leq  \sup_{|y-x|<r^{\frac{1}{N-\beta}}(\rho(x))^{-\frac{\alpha}{N-\beta}}}\int_{0}^{r}\frac{\nu(\mathbb{B}_s(y))}{s}\frac{ds}{s}\\& \asymp  \sup_{|y-x|<r^{\frac{1}{N-\beta}}(\rho(x))^{-\frac{\alpha}{N-\beta}}}(\rho(y))^{\alpha_0-\frac{\alpha N}{N-\beta}}r^{\frac{\beta}{N-\beta}} 
  \\& \asymp  (\rho(x))^{\alpha_0-\frac{\alpha N}{N-\beta}}r^{\frac{\beta}{N-\beta}}   \\& \asymp  \int_{0}^{r}\frac{\nu(\mathbb{B}_s(x))}{s}\frac{ds}{s}.  
  \end{align*}
  Therefore, \eqref{1510201410*} holds.   
    \end{proof}
 \begin{remark} Lemma \ref{201020141} and \ref{011220141} in the case $\alpha=\beta=2$ and $\alpha_0=q+1$ had already been proved by Kalton and Verbitsky in \cite{KaVe}. 
 \end{remark}
    \begin{definition}\label{zz}
    For $\alpha_0\geq 0, 0\leq \alpha\leq \beta<N$ and $ s>1$, we define $\operatorname{Cap}^{\alpha_0}_{\mathbf{N}_{\alpha,\beta},s}$ by
\begin{align*}
 \operatorname{Cap}^{\alpha_0}_{\mathbf{N}_{\alpha,\beta},s}(E)=\inf\left\{\int_{\overline{\Omega}}g^{s}(\rho(x))^{\alpha_0}dx: g\geq 0 , \mathbf{N}_{\alpha,\beta}[g(\rho(.))^{\alpha_0}]\geq \chi_E\right\}
 \end{align*}
 for any Borel set $E\subset\overline{\Omega}$.
    \end{definition} 
    Clearly,  we have
\begin{align*}
\operatorname{Cap}^{\alpha_0}_{\mathbf{N}_{\alpha,\beta},s}(E)=\inf\left\{\int_{\overline{\Omega}}g^{s}(\rho(x))^{-\alpha_0(s-1)}dx: g\geq 0 , \mathbf{N}_{\alpha,\beta}[g]\geq \chi_E\right\}
   \end{align*}
   for any Borel set $E\subset\overline{\Omega}$. Furthermore we have by \cite[Theorem 2.5.1]{55AH},  
\begin{align}\label{051220141}
\left(\operatorname{Cap}^{\alpha_0}_{\mathbf{N}_{\alpha,\beta},s}(E)\right) ^{1/s}=\sup\left\{\omega (E):\omega\in \mathfrak{M}_b^+(E),||\mathbf{N}_{\alpha,\beta}[\omega]||_{L^{s'}(\Omega,(\rho(.)))^{\alpha_0}dx)}\leq 1\right\}
\end{align}
for any compact set $E\subset\overline{\Omega}$, where $s'$ is the conjugate exponent of $s$.\medskip

Thanks to Lemma \ref{201020141} and \ref{011220141} , we can apply Theorem \ref{151020148} and we obtain:  
 \begin{theorem}\label{2010201420}
 Let $\omega\in\mathfrak{M}^+(\overline{\Omega})$,  $\alpha_0\geq 0, 0\leq \alpha\leq \beta<N$ and  $q>1$.  Then the following statements are equivalent:
   \begin{description}
   \item${\bf 1}$ The equation $u= \mathbf{N}_{\alpha,\beta}[ u^q(\rho(.))^{\alpha_0}]+\varepsilon \mathbf{N}_{\alpha,\beta}[ \omega]$ has a solution  for $\varepsilon>0$ small enough.   
   \item${\bf 2}$  The inequality 
   \begin{align}\label{201020143}
    \int_{E\cap\Omega}(\mathbf{N}_{\alpha,\beta}[\omega_E])^{q}(\rho(x))^{\alpha_0}dx\leq C\omega(E)
    \end{align}
   holds  for some $C>0$ and any Borel set $E\subset\overline{\Omega}$, $\omega_E=\omega\chi_E$.
 \item[3.] The inequality  \begin{align}\label{201020147}
   \omega(K)\leq C \operatorname{Cap}^{\alpha_0}_{\mathbf{N}_{\alpha,\beta},q'}(K)
   \end{align}
   holds for some $C>0$ and any compact set $K\subset\overline{\Omega}$.    
   \item[4.] The inequality 
   \begin{align}\label{201020146}
   \mathbf{N}_{\alpha,\beta}\left[\left(\mathbf{N}_{\alpha,\beta}[\omega]\right)^q(\rho(.))^{\alpha_0}\right]\leq C\mathbf{N}_{\alpha,\beta}[\omega]<\infty ~~a.e \text{ in }~~ \Omega
   \end{align}
   holds for some $C>0$.
   \end{description}
 \end{theorem}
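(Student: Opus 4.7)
The plan is to deduce Theorem \ref{2010201420} as a direct specialization of the abstract Kalton--Verbitsky result Theorem \ref{151020148}, applied to the metric space $X=\overline{\Omega}$, the kernel $\mathbf{K}=\mathbf{N}_{\alpha,\beta}$, the reference measure $d\nu(x)=(\rho(x))^{\alpha_0}\chi_{\Omega}\,dx$, and the given data measure $\omega\in\mathfrak{M}^+(\overline{\Omega})$. The strategy is therefore entirely structural: verify the hypotheses of Theorem \ref{151020148} in this setting, then translate each of its four equivalent conditions into the language of the present theorem.

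First I would check the structural assumptions on $(X,\mathbf{K},\nu)$. Symmetry of $\mathbf{N}_{\alpha,\beta}$ and the quasi-metric inequality for $\mathbf{N}_{\alpha,\beta}^{-1}$ come directly from Lemma \ref{201020141}, and the two doubling estimates \eqref{1510201410} and \eqref{1510201410*} on $\nu$-measures of quasi-metric balls are exactly the conclusion of Lemma \ref{011220141}. These are all the assumptions that Theorem \ref{151020148} requires.

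Next I would unwind the notation to identify each Kalton--Verbitsky statement. The potentials read
\[
\mathbf{K}\omega(x)=\mathbf{N}_{\alpha,\beta}[\omega](x),\qquad \mathbf{K}^{\nu}f(x)=\mathbf{N}_{\alpha,\beta}\bigl[f(\rho(\cdot))^{\alpha_0}\bigr](x),
\]
so that the integral equation $u=\mathbf{K}^{\nu}u^q+\varepsilon\mathbf{K}\omega$ becomes statement \textbf{1}, the testing inequality $\int_E(\mathbf{K}\omega_E)^q\,d\nu\leq C\omega(E)$ becomes statement \textbf{2}, and the pointwise bound $\mathbf{K}^{\nu}(\mathbf{K}\omega)^q\leq C\mathbf{K}\omega$ $\nu$-a.e. becomes statement \textbf{4} (the equivalence of $\nu$-a.e. and Lebesgue a.e. on $\Omega$ follows from the strict positivity of $(\rho)^{\alpha_0}$ on $\Omega$). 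Under the substitution $g\mapsto g(\rho)^{-\alpha_0}$ in the infimum, the Kalton--Verbitsky capacity $\operatorname{Cap}^{\nu}_{\mathbf{K},q'}$ reduces to $\operatorname{Cap}^{\alpha_0}_{\mathbf{N}_{\alpha,\beta},q'}$ of Definition \ref{zz}, giving statement \textbf{3}; the upgrade from Borel sets in Theorem \ref{151020148} to arbitrary compact sets $K\subset\overline{\Omega}$ in statement \textbf{3} is automatic by inner regularity.

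The only bookkeeping point is the quantifier on $\varepsilon$: Theorem \ref{151020148} yields solvability \emph{for some} $\varepsilon>0$, whereas statement \textbf{1} asks for solvability for \emph{all sufficiently small} $\varepsilon>0$. These are equivalent by a standard monotone iteration: if $u_{\varepsilon_0}$ solves the equation for some $\varepsilon_0>0$, then for any $\varepsilon\in(0,\varepsilon_0]$ the same $u_{\varepsilon_0}$ is a supersolution, so starting from $u^{(0)}=0$ and iterating $u^{(k+1)}=\mathbf{N}_{\alpha,\beta}[(u^{(k)})^q(\rho)^{\alpha_0}]+\varepsilon\mathbf{N}_{\alpha,\beta}[\omega]$ produces an increasing sequence bounded by $u_{\varepsilon_0}$ whose limit is a solution. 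I do not expect a substantial obstacle here: the analytic work has already been absorbed into Theorem \ref{151020148} together with Lemmas \ref{201020141}--\ref{011220141}, and the proof is essentially a dictionary.
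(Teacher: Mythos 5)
Your proposal is correct and follows exactly the route the paper takes: the paper derives Theorem \ref{2010201420} by invoking Theorem \ref{151020148} with $X=\overline{\Omega}$, $\mathbf{K}=\mathbf{N}_{\alpha,\beta}$ and $d\nu=(\rho(\cdot))^{\alpha_0}\chi_{\Omega}\,dx$, the hypotheses being supplied by Lemmas \ref{201020141} and \ref{011220141}. Your translation of the four statements (including the capacity substitution $g\mapsto g(\rho)^{-\alpha_0}$ and the $\varepsilon$-quantifier remark) is precisely the dictionary the paper leaves implicit.
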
    
   To apply the previous theorem we need the following result.    
   \begin{proposition}\label{2010201434}Let $q>1$, $\nu, \omega\in \mathfrak{M}^+(X)$. Suppose that $A_1,A_2,B_1,B_2:X\times X\mapsto [0,+\infty)$ are Borel positive Kernel functions with $A_1\asymp A_2,B_1\asymp B_2$. Then, the following statements are equivalent:   \begin{description}
   \item${\bf 1}$ The equation $u=A^\nu_1u^q+\varepsilon B_1\omega\;$ $\nu$-a.e  has a solution for $\varepsilon>0$ small enough.
   \item${\bf 2}$ The equation $u=A^\nu_2u^q+\varepsilon B_2\omega\;$ $\nu-$a.e has a solution for $\varepsilon>0$ small enough.
   \item[3.] The problem $u\asymp A^\nu_1u^q+\varepsilon B_1\omega\;$ $\nu$-a.e has a solution  for $\varepsilon>0$ small enough.
   \item[4.] The equation $u\gtrsim A^\nu_1u^q+\varepsilon B_1\omega\;$ $\nu$-a.e has a solution  for $\varepsilon>0$ small enough.
      \end{description}
   \end{proposition}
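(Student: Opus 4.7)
The plan is to close the loop by first observing that $(1)\Rightarrow(3)\Rightarrow(4)$ is immediate from the definitions --- an equality solution trivially satisfies the two-sided $\asymp$ bound with constant $1$, and $\asymp$ implies $\gtrsim$ --- so the actual content lies in the implication $(4)\Rightarrow(1)$ together with the equivalence $(1)\Leftrightarrow(2)$. The latter will be a formal consequence of the former once the kernel comparability $A_1\asymp A_2$, $B_1\asymp B_2$ is invoked.

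To establish $(4)\Rightarrow(1)$, I would first rescale to normalise the implicit constant and then close the problem by monotone iteration. Assuming $u\geq c_0\bigl(A_1^\nu u^q + \varepsilon B_1\omega\bigr)$ $\nu$-a.e.\ for some $c_0>0$ and small $\varepsilon>0$, I would set $\tilde u := c_0^{1/(q-1)} u$; a short computation then gives
\[
\tilde u \;\geq\; A_1^\nu \tilde u^q + \varepsilon'\,B_1\omega,\qquad \varepsilon' := c_0^{q/(q-1)}\varepsilon,
\]
so $\tilde u$ is a $\nu$-a.e.\ supersolution of the target equation. I would then build a solution by the iteration $v_0 := \varepsilon' B_1\omega$, $v_{n+1} := A_1^\nu v_n^q + \varepsilon' B_1\omega$. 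Since $t\mapsto t^q$ is increasing on $[0,\infty)$ and $v_0\leq \tilde u$, induction gives $0\leq v_n\leq v_{n+1}\leq \tilde u$ $\nu$-a.e.; monotone convergence then identifies the limit $v:=\lim_n v_n$ as a $\nu$-a.e.\ finite solution of $v = A_1^\nu v^q + \varepsilon' B_1\omega$, which is $(1)$ at parameter $\varepsilon'$ (still small since $\varepsilon$ is).

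For $(1)\Leftrightarrow(2)$ I would transfer via kernel comparability. If $u$ solves $(1)$, the bounds $A_2\leq C A_1$ and $B_2\leq C B_1$ yield
\[
A_2^\nu u^q + \varepsilon B_2\omega \;\leq\; C\bigl(A_1^\nu u^q + \varepsilon B_1\omega\bigr) = C u,
\]
so $u$ satisfies the $(4)$-type bound for the pair $(A_2,B_2)$. Running the previous argument with this pair produces a solution of $(2)$, possibly at a smaller parameter; the reverse direction is symmetric.

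The hard part will be the convergence step in the monotone iteration: one has to ensure that the iterates $v_n$ remain controlled so that $A_1^\nu v_n^q$ is meaningful in the limit and the identity passes through. The pointwise majorant $\tilde u$, which is $\nu$-a.e.\ finite by the supersolution inequality, is precisely what is needed, and it is supplied by the preliminary rescaling. Once this is handled, the rest of the argument is bookkeeping with the comparability constants.
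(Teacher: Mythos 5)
Your proposal is correct and follows essentially the same route as the paper: the only nontrivial implication is handled by a monotone iteration kept below a rescaled supersolution, with the comparability constants of $A_1\asymp A_2$, $B_1\asymp B_2$ absorbed into the rescaling exponent $1/(q-1)$ (the paper proves $4\Rightarrow 2$ in one step with iterates $u_{n+1}=A_2^\nu u_n^q+\varepsilon_0(c_1c_2)^{-q/(q-1)}B_2\omega$ starting from $u_0=0$, which is the same argument you factor through $(4)\Rightarrow(1)\Rightarrow(2)$).
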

\begin{proof} We prove only that 4 implies 2.  Suppose that there exist $c_1>0,\varepsilon_0>0$ and a position Borel function $u$ such that 
   \begin{align*}
   A^\nu_1u^q+\varepsilon_0 B_1\omega\leq c_1 u. 
   \end{align*}
   Taken $c_2>0$ with  $A_2\leq c_2A_1,B_2\leq c_2B$. We consider $u_{n+1}=A^\nu_2u_n^q+\varepsilon_0(c_1c_2)^{-\frac{q}{q-1}} B_2\omega$ and $u_0=0$ for any $n\geq 0$. Clearly, $u_{n}\leq (c_1c_2)^{-\frac{1}{q-1}}u$ for any $n$ and $\{u_n\}$ is nondecreasing. Thus, $U=\lim\limits_{n\to \infty }u_n$ is a solution of $U=A^\nu_2U^q+\varepsilon_0(c_1c_2)^{-\frac{q}{q-1}} B_2\omega$.
   \end{proof}\medskip
   
   The following results provide some relations between the capacities  $\operatorname{Cap}^{\alpha_0}_{\mathbf{N}_{\alpha,\beta},s}$ and the Riesz capacities on $\mathbb{R}^{N-1}$ which allow to define the capacities on $\partial\Omega$. 

    \begin{proposition}\label{2010201417}
Assume that $\Omega=\mathbb{R}^{N-1}\times (0,\infty)$ and let $\alpha_0\geq 0$ such that 
$$-1+s'(1+\alpha-\beta)<\alpha_0<-1+s'(N-\beta+\alpha).$$ 
There holds 
\begin{align}\label{201020149}
\operatorname{Cap}^{\alpha_0}_{\mathbf{N}_{\alpha,\beta},s}(K\times\{0\})\asymp \operatorname{Cap}_{I_{\beta-\alpha+\frac{\alpha_0+1}{s'}-1},s'}(K)
\end{align}
 for any compact set $K\subset\mathbb{R}^{N-1},$
    \end{proposition}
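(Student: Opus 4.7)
The plan is to pass to the Adams--Hedberg dual characterization \eqref{051220141} of both capacities and reduce the claim to a comparison of the $L^{s'}$-norms of the two potentials. Concretely, for $\omega\in\mathfrak M_b^+(\mathbb R^{N-1})$, identified with $\omega\otimes\delta_0\in\mathfrak M_b^+(K\times\{0\})$, I aim to establish
\[
\|\mathbf N_{\alpha,\beta}[\omega]\|_{L^{s'}(\Omega,\rho^{\alpha_0}dx)}\,\asymp\,\|I_\gamma\ast\omega\|_{L^{s'}(\mathbb R^{N-1})},\qquad \gamma=\beta-\alpha+\frac{\alpha_0+1}{s'}-1.
\]
Once this is proved, taking the supremum of $\omega(K)$ over such measures and invoking the analogous Maz'ya-type duality for the Riesz capacity on $\mathbb R^{N-1}$ yields the stated equivalence of capacities.

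A point $y=(y',0)\in\partial\mathbb R^N_+$ satisfies $\rho(y)=0\le\rho(x)\le|x-y|$, so $\mathbf N_{\alpha,\beta}(x,(y',0))=|x-(y',0)|^{-m}$ with $m=N-\beta+\alpha$. Splitting the $y'$-integration according to whether $|y'-x'|\le t$ or $|y'-x'|>t$, and then integrating by parts in the radial variable, gives the pointwise decomposition
\[
\mathbf N_{\alpha,\beta}[\omega](x',t)\,\asymp\,\int_t^\infty \omega(B'_r(x'))\,\frac{dr}{r^{m+1}},
\]
where $B'_r(x')$ denotes the Euclidean ball in $\mathbb R^{N-1}$.

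A one-dimensional Hardy inequality with weight $t^{\alpha_0}$ in the transversal direction, proved by integration by parts and H\"older, yields
\[
\int_0^\infty t^{\alpha_0}\mathbf N_{\alpha,\beta}[\omega](x',t)^{s'}dt\,\asymp\,\int_0^\infty t^{\alpha_0-ms'}\omega(B'_t(x'))^{s'}dt.
\]
The two-sided bound on $\alpha_0$ is used here: the lower inequality $\alpha_0>-1+s'(1+\alpha-\beta)$ gives integrability near $t=0$ (testing against a smooth $\omega$ with $\omega(B'_t)\asymp t^{N-1}$), and the upper one $\alpha_0<-1+s'(N-\beta+\alpha)$ gives integrability at $t=\infty$. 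Integrating in $x'$ and invoking Wolff's nonlinear potential inequality on $\mathbb R^{N-1}$ identifies the resulting quantity with $\|I_\gamma\ast\omega\|_{L^{s'}(\mathbb R^{N-1})}^{s'}$; the Riesz index $\gamma$ is pinned down by matching the ball scaling, since for $\omega$ the Lebesgue measure on $B'_R$ both sides are of size $R^{\alpha_0+N+(N-1-m)s'}=R^{(N-1)+\gamma s'}$, which forces exactly $\gamma=\beta-\alpha+(\alpha_0+1)/s'-1$.

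The main obstacle is this last identification, as the quantity $\int\!\!\int t^{\alpha_0-ms'}\omega(B'_t(x'))^{s'}dt\,dx'$ is not literally a Wolff potential. Converting it into one requires a Fubini-type swap between $dx'$ and $d\omega(y')$, after which the doubling character of Lebesgue measure on $\mathbb R^{N-1}$ (which makes the hypotheses \eqref{1510201410}--\eqref{1510201410*} of Theorem \ref{151020148} automatic there) allows one to replace the ball averages by the correct Wolff integrand. The two-sided constraint on $\alpha_0$ is essential at precisely this step because it is what places the Riesz index in the admissible range $0<\gamma<N-1$, which is the range in which Wolff's inequality and the Riesz capacity duality on $\mathbb R^{N-1}$ both apply.
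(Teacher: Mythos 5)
Your proposal is correct and follows essentially the same route as the paper: dualize both capacities via \cite[Theorem 2.5.1]{55AH} and \eqref{051220141}, represent $\mathbf N_{\alpha,\beta}[\omega\otimes\delta_0](x',t)$ as $\int_t^\infty \omega(B'_r(x'))r^{-m-1}dr$, run a weighted one-dimensional Hardy estimate in $t$ (the paper's lower bound by a dyadic shell and upper bound by H\"older--Fubini), and finish with the Wolff-type equivalence between $\int\!\!\int t^{\gamma s'-(N-1)s'}\omega(B'_t(x'))^{s'}\frac{dt}{t}dx'$ and $\|I_\gamma[\omega]\|_{L^{s'}}^{s'}$, which the paper delegates to \cite[Theorem 2.3]{55VHV}. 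Your identification of the hypothesis on $\alpha_0$ as exactly the condition $0<\gamma<N-1$ is also the right reading.
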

\begin{proof} The proof relies on an idea of  \cite[Corollary 4.20]{QH}. Thanks to \cite[Theorem 2.5.1]{55AH} and \eqref{051220141},  we get \eqref{201020149} from the following estimate:
for any $\mu\in \mathfrak{M}_b^+(\mathbb{R}^{N-1})$
   \begin{align}\label{2010201410} ||\mathbf{N}_{\alpha,\beta}[\mu\otimes\delta_{\{x_N=0\}}]||_{L^{s'}(\Omega,(\rho(.)))^{\alpha_0}dx)}\asymp|| I_{\beta-\alpha+\frac{\alpha_0+1}{s'}-1}[\mu]||_{L^{s'}(\mathbb{R}^{N-1})},
\end{align}    
where  $I_{\gamma}[\mu]$ is the Riesz potential of $\mu$ in $\mathbb{R}^{N-1}$, i.e
  \begin{align*}I_{\gamma}[\mu](y)=\int_{0}^{\infty}\frac{\mu(B'_r(y))}{r^{N-1-\gamma}}\frac{dr}{r}~~\forall~y\in\mathbb{R}^{N-1},\end{align*} with $B'_r(y)$ being a ball in $\mathbb{R}^{N-1}$. We have 
   \begin{align*}
   ||\mathbf{N}_{\alpha,\beta}[\mu\otimes\delta_{\{x_N=0\}}]||_{L^{s'}(\Omega,(\rho(.))^{\alpha_0}dx)}^{s'}&=\int_{\mathbb{R}^{N-1}}\int_{0}^{\infty}\left(\int_{\mathbb{R}^{N-1}}\frac{d\mu(z)}{(|x'-z|^2+x_N^2)^{\frac{N-\beta+\alpha}{2}}}\right)^{s'}x_N^{\alpha_0}dx_Ndx'\\&
 \asymp\int_{\mathbb{R}^{N-1}}\int_{0}^{\infty}\left(\int_{x_N}^{\infty}\frac{\mu(B'_r(x'))}{r^{N-\beta+\alpha}}\frac{dr}{r}\right)^{s'}x_N^{\alpha_0}dx_Ndx'.
   \end{align*}
   Notice that 
   \begin{align*}
   \int_{0}^{\infty}\left(\int_{x_N}^{\infty}\frac{\mu(B'_r(x'))}{r^{N-\beta+\alpha}}\frac{dr}{r}\right)^{s'}x_N^{\alpha_0}dx_N&\geq  \int_{0}^{\infty}\left(\int_{x_N}^{2x_N}\frac{\mu(B'_r(x'))}{r^{N-\beta+\alpha}}\frac{dr}{r}\right)^{s'}x_N^{\alpha_0}dx_N\\& \gtrsim
   \int_{0}^{\infty}\left(\frac{\mu(B'_{x_N}(x'))}{x_N^{N-\beta+\alpha-\frac{\alpha_0+1}{s'}}}\right)^{s'}\frac{d x_N}{x_N}.
   \end{align*}
On the other hand, using H\"older's inequality and Fubini's Theorem, we obtain
  \begin{align*}
  &\int_{0}^{\infty}\left(\int_{x_N}^{\infty}\frac{\mu(B'_r(x'))}{r^{N-\beta+\alpha}}\frac{dr}{r}\right)^{s'}x_N^{\alpha_0}dx_N\leq \int_{0}^{\infty}\left(\int_{x_N}^{\infty}r^{-\frac{s}{2s'}}\frac{dr}{r}\right)^{\frac{s'}{s}}\int_{x_N}^{\infty}\left(\frac{\mu(B'_r(x'))}{r^{N-\beta+\alpha-\frac{1}{2s'}}}\right)^{s'}\frac{dr}{r}x_N^{\alpha_0}dx_N\\[2mm]&~~~~~~~~~~~~~~~~~~~~~~~~~~~~~~~~~~~~~~~~~~~~~~~
  =C
  \int_{0}^{\infty}\int_{x_N}^{\infty}\left(\frac{\mu(B'_r(x'))}{r^{N-\beta+\alpha-\frac{1}{2s'}}}\right)^{s'}\frac{dr}{r}x_N^{\alpha_0-\frac{1}{2}}dx_N \\[2mm]&~~~~~~~~~~~~~~~~~~~~~~~~~~~~~~~~~~~~~~~~~~~~~~~
  =C \int_{0}^{\infty}\int_{0}^{r}x_N^{\alpha_0-\frac{1}{2}}dx_N\left(\frac{\mu(B'_r(x'))}{r^{N-\beta+\alpha-\frac{1}{2s'}}}\right)^{s'}\frac{dr}{r}\\[2mm]&~~~~~~~~~~~~~~~~~~~~~~~~~~~~~~~~~~~~~~~~~~~~~~~
  =C \int_{0}^{\infty}\left(\frac{\mu(B'_{r}(x'))}{r^{N-\beta+\alpha-\frac{\alpha_0+1}{s'}}}\right)^{s'}\frac{d r}{r}.
  \end{align*} 
  Thus,   
    \begin{align}
  ||\mathbf{N}_{\alpha,\beta}[\mu\otimes\delta_{\{x_N=0\}}]||_{L^{s'}(\Omega,(\rho(.)))^{\alpha_0}dx)}\asymp\left(\int_{\mathbb{R}^{N-1}}\int_{0}^{\infty}\left(\frac{\mu(B'_{r}(y))}{r^{N-\beta+\alpha-\frac{\alpha_0+1}{s'}}}\right)^{s'}\frac{d r}{r}dy\right)^{1/s'}.
  \end{align}
 It implies  \eqref{2010201410} from \cite[Theorem 2.3]{55VHV}. 
    \end{proof}  \medskip\\  

    \begin{proposition}\label{2010201414} Let $\Omega\subset \mathbb{R}^N$ be a bounded domain a  $ C^2$ boundary. Assume  $\alpha_0\geq 0$ and  $-1+s'(1+\alpha-\beta)<\alpha_0<-1+s'(N-\beta+\alpha)$. Then there holds
\begin{align}\label{121120141}
    \operatorname{Cap}^{\alpha_0}_{\mathbf{N}_{\alpha,\beta},s}(E)\asymp \operatorname{Cap}^{\partial\Omega}_{\beta-\alpha+\frac{\alpha_0+1}{s'}-1,s}(E)
    \end{align}
 for any compact set $E\subset\partial\Omega\subset\mathbb{R}^N.$
    \end{proposition}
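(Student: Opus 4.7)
The plan is to reduce this bounded-domain statement to the half-space result of Proposition \ref{2010201417} by localizing through the boundary charts $(O_i,T_i,K_i)_{i=1}^m$ of Definition \ref{131120144}. Setting $E_i=E\cap K_i$ and $\gamma:=\beta-\alpha+\frac{\alpha_0+1}{s'}-1$, the right-hand side equals $\sum_{i=1}^m \operatorname{Cap}_{G_\gamma,s}(\tilde T_i(E_i))$. Using monotonicity together with finite subadditivity of $\operatorname{Cap}^{\alpha_0}_{\mathbf{N}_{\alpha,\beta},s}$, the claim reduces to the single-chart equivalence
\begin{equation*}
\operatorname{Cap}^{\alpha_0}_{\mathbf{N}_{\alpha,\beta},s}(E_i)\asymp \operatorname{Cap}_{G_\gamma,s}(\tilde T_i(E_i))\qquad\text{for each }i.
\end{equation*}

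To prove this, I will pass to the dual formulation \eqref{051220141}, which rewrites both sides, up to the $1/s$ power, as $\sup\{\omega(E_i):\omega\in\mathfrak M_b^+(E_i),\,\|\cdot\|\le 1\}$, with the norm either $\|\mathbf{N}_{\alpha,\beta}[\omega]\|_{L^{s'}(\Omega,\rho^{\alpha_0}dx)}$ or $\|G_\gamma*(\tilde T_i)_*\omega\|_{L^{s'}(\mathbb{R}^{N-1})}$. It therefore suffices to establish the norm equivalence
\begin{equation*}
\|\mathbf{N}_{\alpha,\beta}[\omega]\|_{L^{s'}(\Omega,\rho^{\alpha_0}dx)}\asymp \|G_\gamma*(\tilde T_i)_*\omega\|_{L^{s'}(\mathbb{R}^{N-1})}
\end{equation*}
for every $\omega\in\mathfrak M_b^+(E_i)$.

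For the left-hand norm I split the integral into $O_i\cap\Omega$ and $\Omega\setminus O_i$. On the first piece the change of variables $z=T_i(x)$, together with the bi-Lipschitz property $|T_i(x)-T_i(y)|\asymp|x-y|$, the Jacobian bound $|\mathbf{J}_{T_i}|\asymp 1$, and $\rho(T_i^{-1}(z))\asymp z_N$, transfers $\mathbf{N}_{\alpha,\beta}(x,y)$ into the half-space kernel evaluated at $T_i(x)$ and $(\tilde T_i(y),0)$. On the second piece, $|x-y|\ge\operatorname{dist}(K_i,\mathbb{R}^N\setminus O_i)>0$, so the kernel is uniformly bounded and the contribution is controlled by $\omega(E_i)^{s'}$, which is absorbed. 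This reduces matters to the half-space estimate
\begin{equation*}
\|\mathbf{N}^{\mathbb{R}^N_+}_{\alpha,\beta}[(\tilde T_i)_*\omega\otimes\delta_{\{z_N=0\}}]\|_{L^{s'}(\mathbb{R}^N_+,z_N^{\alpha_0}dz)}\asymp \|I_\gamma*(\tilde T_i)_*\omega\|_{L^{s'}(\mathbb{R}^{N-1})},
\end{equation*}
which is exactly the identity \eqref{2010201410} derived inside the proof of Proposition \ref{2010201417}. The remaining step is to pass from Riesz to Bessel: since $(\tilde T_i)_*\omega$ has bounded support in $\tilde T_i(K_i)\subset\mathbb{R}^{N-1}$ and $G_\gamma$ agrees with $I_\gamma$ up to a smooth, rapidly decaying remainder, a standard comparison yields $\|I_\gamma*(\tilde T_i)_*\omega\|_{L^{s'}(\mathbb{R}^{N-1})}\asymp \|G_\gamma*(\tilde T_i)_*\omega\|_{L^{s'}(\mathbb{R}^{N-1})}$. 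The hypothesis $-1+s'(1+\alpha-\beta)<\alpha_0<-1+s'(N-\beta+\alpha)$ guarantees $0<\gamma<N-1$, which is the regime in which Proposition \ref{2010201417} applies and the Bessel--Riesz comparison is meaningful.

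I expect the main obstacle to be the clean execution of the kernel comparison in local coordinates, in particular handling points $x\in\Omega$ lying outside the chart $O_i$, where the integrand is still nontrivial but no longer matches the half-space kernel through $T_i$, and verifying that their absorption does not destroy the lower bound in the dual characterization. Once that slightly tedious bookkeeping is in place, the proposition follows by combining Proposition \ref{2010201417} with the standard Bessel--Riesz equivalence on bounded subsets of $\mathbb{R}^{N-1}$.
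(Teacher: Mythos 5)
Your localization, dualization via \eqref{051220141}, splitting into $O_i\cap\Omega$ and $\Omega\setminus O_i$, and change of variables all match the paper's proof. The gap is in the last two steps. After the change of variables the $x$-integration runs only over $B_1(0)\cap\{x_N>0\}$, a \emph{bounded} piece of the half-space, so what you obtain is a truncated potential of the form
\begin{align*}
\int_{\mathbb{R}^{N-1}}\int_{0}^{\infty}\Bigl(\int_{\min\{y_N,R\}}^{2R}\frac{\omega_i(B_r'(y'))}{r^{N-\beta+\alpha}}\frac{dr}{r}\Bigr)^{s'}y_N^{\alpha_0}\,dy_N\,dy',
\end{align*}
not the full half-space quantity of \eqref{2010201410}. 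You cannot simply identify the two: extending the $x$-integration to all of $\mathbb{R}^N_+$ adds a tail that, for a nontrivial compactly supported $\omega_i$, behaves like $\omega_i(\mathbb{R}^{N-1})^{s'}\int_{|y|>1}|y|^{-(N-\beta+\alpha)s'}y_N^{\alpha_0}dy$, and a short computation shows this converges exactly when $\gamma s<N-1$, where $\gamma=\beta-\alpha+\frac{\alpha_0+1}{s'}-1$. For the same reason your final Riesz-to-Bessel comparison fails outside that range: when $\gamma s\geq N-1$ one has $I_\gamma*\mu\notin L^{s'}(\mathbb{R}^{N-1})$ for every nontrivial $\mu\in\mathfrak M_b^+$ with compact support (the tail $|y|^{\gamma-(N-1)}$ is not $s'$-integrable), so $\operatorname{Cap}_{I_\gamma,s}$ degenerates while $\operatorname{Cap}_{G_\gamma,s}$ does not — this is precisely why \eqref{051220143} carries the restriction $\gamma s<N-1$ and why \eqref{051220144} singles out $\gamma s>N-1$ for the Bessel-type capacity.

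The hypothesis of the proposition only forces $0<\gamma<N-1$, and the regime $\gamma s\geq N-1$ is genuinely needed (for $\mathbf{N}_{2,2}$, $\alpha_0=q+1$, $s=q'$ one gets $\gamma s=\frac{2}{q-1}$, so the subcritical case $1<q<\frac{N+1}{N-1}$ of Theorem \ref{121120147}, which the paper treats via \eqref{051220144}, lies exactly there). So your argument only proves the statement when $\gamma s<N-1$. The paper avoids this by never passing to the full half-space: it keeps the truncated potential with upper cutoff $2R=2\operatorname{diam}(\Omega)$ — the cutoff is what produces Bessel rather than Riesz behavior — and invokes \cite[Theorem 2.3]{55VHV} to identify its $L^{s'}$ norm directly with $\|G_\gamma[\omega_i]\|_{L^{s'}(\mathbb{R}^{N-1})}$. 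To repair your write-up you should do the same: do not appeal to \eqref{2010201410} itself, but to its truncated analogue, and replace the Riesz--Bessel comparison by the equivalence between the truncated Wolff-type potential and the Bessel potential.
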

\begin{proof} Let $K_1,...,K_m$ be as in definition \ref{131120144}. We have 
  \begin{align*}
 \operatorname{Cap}^{\alpha_0}_{\mathbf{N}_{\alpha,\beta},s}(E)\asymp \sum_{i=1}^{m}\operatorname{Cap}^{\alpha_0}_{\mathbf{N}_{\alpha,\beta},s}(E\cap K_i),
   \end{align*}
   for any  compact set $E\subset\partial\Omega.$ By definition \ref{131120144}, we need to prove that 
    \begin{align}
    \operatorname{Cap}^{\alpha_0}_{\mathbf{N}_{\alpha,\beta},s}(E\cap K_i)\asymp \operatorname{Cap}_{G_{\beta-\alpha+\frac{\alpha_0+1}{s'}-1},s}(\tilde{T}_i(E\cap K_i))~~\forall~i=1,2,...,m.
    \end{align}    
   We can show that  for any $\omega\in\mathfrak{M}_b^+(\partial\Omega)$ and $i=1,...,m$, there exists $\omega_i\in \mathfrak{M}_b^+(\tilde{T}_i(K_i))$ with $T_i(K_i)=\tilde{T}_i(K_i)\times\{x_N=0\}$ such that 
\begin{align*}
\omega_{i}(O)=\omega(T_i^{-1}(O\times \{0\}))
\end{align*}
for all Borel set $O\subset \tilde{T}_i(K_i)$, its proof can be found in \cite[Proof of Lemma 5.2.2]{55AH}.  Thanks to \cite[Theorem 2.5.1]{55AH}, it is enough to show that for any $i\in \{1,2,...,m\}$ there holds
  \begin{align}\label{2010201412}
  ||\mathbf{N}_{\alpha,\beta}[\chi_{K_i}\omega]||_{L^{s'}(\Omega,(\rho(.)))^{\alpha_0}dx)}\asymp|| G_{\beta-\alpha+\frac{\alpha_0+1}{s'}-1}[\omega_i]||_{L^{s'}(\mathbb{R}^{N-1})},
  \end{align}  
   where  $G_{\gamma}[\omega_i]~(0<\gamma<N-1)$ is the Bessel potential of $\omega_i$ in $\mathbb{R}^{N-1}$, i.e
   \begin{align*}
   G_{\gamma}[\omega_i](x)=\int_{\mathbb{R}^{N-1}}G_{\gamma}(x-y)d\omega_i(y). 
   \end{align*} 
  Indeed, we have 
  \begin{align*}
 & ||\mathbf{N}_{\alpha,\beta}[\omega\chi_{K_i}]||_{L^{s'}(\Omega,(\rho(.)))^{\alpha_0}dx)}^{s'}=\int_{\Omega}\left(\int_{K_i}\frac{d\omega(z)}{|x-z|^{N-\beta+\alpha}}\right)^{s'}(\rho(x))^{\alpha_0}dx
  \\&~~~~~=\int_{O_i\cap \Omega}\left(\int_{K_i}\frac{d\omega(z)}{|x-z|^{N-\beta+\alpha}}\right)^{s'}(\rho(x))^{\alpha_0}dx+\int_{\Omega\backslash O_i}\left(\int_{K_i}\frac{d\omega(z)}{|x-z|^{N-\beta+\alpha}}\right)^{s'}(\rho(x))^{\alpha_0}dx
  \\&~~~~~\asymp\int_{O_i\cap \Omega}\left(\int_{K_i}\frac{d\omega(z)}{|x-z|^{N-\beta+\alpha}}\right)^{s'}(\rho(x))^{\alpha_0}dx+(\omega(K_i))^{s'}.
  \end{align*}
  Here we used $|x-z|\asymp 1$ for any $x\in \Omega\backslash O_i, z\in K_i$. \\
By  a standard change of variable we obtain
  \begin{align*}
 & \int_{O_i\cap \Omega}\left(\int_{K_i}\frac{d\omega(z)}{|x-z|^{N-\beta+\alpha}}\right)^{s'}(\rho(x))^{\alpha_0}dx+(\omega(K_i))^{s'}\\&~~~~~=\int_{T_i(O_i\cap \Omega)}\left(\int_{K_i}\frac{d\omega(z)}{|T_i^{-1}(y)-z|^{N-\beta+\alpha}}\right)^{s'}(\rho(T_i^{-1}(y)))^{\alpha_0}|\mathbf{J}_{T_i}(T_i^{-1}(y))|^{-1}dy+(\omega(K_i))^{s'}
   \\&~~~~~\asymp\int_{B_1(0)\cap \{x_N>0\}}\left(\int_{K_i}\frac{d\omega(z)}{|y-T_i(z)|^{N-\beta+\alpha}}\right)^{s'}y_N^{\alpha_0}dy+(\omega(K_i))^{s'}~\text{ with } y=(y',y_N),
  \end{align*}
  since $|T_i^{-1}(y)-z|\asymp |y-T_i(z)| $, $|\mathbf{J}_{T_i}(T_i^{-1}(y))|\asymp 1$ and $\rho(T_i^{-1}(y))\asymp y_N$ for all $(y,z)\in T_i(O_i\cap \Omega)\times  K_i$. \
 From the definition of $\omega_i$, we have 
  \begin{align*}
  &\int_{B_1(0)\cap \{x_N>0\}}\left(\int_{K_i}\frac{1}{|y-T_i(z)|^{N-\beta+\alpha}}d\omega(z)\right)^{s'}y_n^{\alpha_0}dy+(\omega(K_i))^{s'}
   \\&~~~=\int_{B_1(0)\cap \{x_N>0\}}\left(\int_{\tilde{T}_i(K_i)}\frac{1}{(|y'-\xi|^2+y_N^2)^{\frac{{N-\beta+\alpha}}{2}}}d\omega_i(\xi)\right)^{s'}y_N^{\alpha_0}dy_Ndy'+(\omega(K_i))^{s'}\\&~~~\asymp \int_{\mathbb{R}^{N-1}}\int_{0}^{\infty}\left(\int_{\min\{y_N,R\}}^{2R}\frac{\omega_i(B'_r(y'))}{r^{N-\beta+\alpha}}\frac{dr}{r}\right)^{s'}y_N^{\alpha_0}dy_Ndy'~~\text{ with }~R=\operatorname{diam\,}(\Omega).
  \end{align*}
  As in the proof of Proposition \ref{2010201417}, there holds 
  \begin{align*}
  &\int_{\mathbb{R}^{N-1}}\int_{0}^{\infty}\left(\int_{\min\{y_N,R\}}^{2R}\frac{\omega_i(B'_r(y'))}{r^{N-\beta+\alpha}}\frac{dr}{r}\right)^{s'}y_N^{\alpha_0}dy_Ndy'
 \\& ~~~~~\asymp \int_{\mathbb{R}^{N-1}}\int_{0}^{2R}\left(\frac{\omega_i(B'_r(y'))}{r^{{N-\beta+\alpha}-\frac{\alpha_0+1}{s'}}}\right)^{s'}\frac{d r}{r}dy'. \end{align*}
 Therefore, we get \eqref{2010201412} from \cite[Theorem 2.3]{55VHV}. 
  \end{proof}
  \begin{remark} Proposition \ref{2010201417} and \ref{2010201414} with $\alpha=\beta=2,\alpha_0=q+1$ were demonstrated by Verbitsky in \cite[Apppendix B]{Dyn}, using an alternative approach. 
  \end{remark}
 \section{Proof of the main results}
 We denote 
 \begin{align*}
 \mathbf{P}[\sigma](x)=\int_{\partial\Omega}\operatorname{P}(x,z)d\sigma(z), ~~\mathbf{G}[f](x)=\int_{\Omega}\operatorname{G}(x,y)f(y)dy
 \end{align*}
 for any $\sigma\in \mathfrak{M}(\partial\Omega), f\in L^1_{\rho}(\Omega), f\geq 0$. Then  the unique  weak solution of 
 $$\begin{array}{lll}
 -\Delta u=f\qquad&\text {in }\Omega,\\
 \phantom{ -\Delta}
 u=\sigma\qquad&\text {on }\partial\Omega,
 \end{array}$$ 
 can be represented by   
 \begin{align*}
 u(x)=\mathbf{G}[f](x)+\mathbf{P}[\sigma](x)~~\forall ~x\in\Omega.
 \end{align*}
 We recall below some classical estimates for the Green and the Poisson kernels.
  \begin{align*}
  & \operatorname{G}(x,y)\asymp \min\left\{\frac{1}{|x-y|^{N-2}}, \frac{\rho(x)\rho(y)}{|x-y|^{N}}\right\},\\&
  \operatorname{P}(x,z)\asymp \frac{\rho(x)}{|x-z|^{N}},
  \end{align*}
  and 
  \begin{align*}
  |\nabla_x \operatorname{G}(x,y)|\lesssim \frac{\rho(y)}{|x-y|^{N}}\min\left\{1,\frac{|x-y|}{\sqrt{\rho(x)\rho(y)}}\right\}, ~~|\nabla_x \operatorname{P}(x,z)|\lesssim \frac{1}{|x-z|^N},
  \end{align*}
  for any $(x,y,z)\in\Omega\times\Omega\times\partial\Omega$, see \cite{BiVi}.
 Since $|\rho(x)-\rho(y)|\leq |x-y|$ we have 
\begin{align*}
\max\left\{\rho(x)\rho(y),|x-y|^2\right\}\asymp\max\left\{|x-y|,\rho(x),\rho(y)\right\}^2.
\end{align*} 
 Thus, 
 \begin{align}\label{051220145}
 \min\left\{1,\left(\frac{|x-y|}{\sqrt{\rho(x)\rho(y)}}\right)^\gamma\right\}
 \asymp \frac{|x-y|^\gamma}{\left(\max\left\{|x-y|,\rho(x),\rho(y)\right\}\right)^{\gamma}}~~\text{ for }~\gamma>0.
 \end{align}  
 Therefore, 
\begin{align}\label{1311201427a}
\operatorname{G}(x,y)\asymp  \rho(x)\rho(y)\mathbf{N}_{2,2}(x,y),~~\operatorname{P}(x,z)\asymp \rho(x)\mathbf{N}_{\alpha,\alpha}(x,z)
\end{align}   
and 
\begin{align}\label{1311201426a}
|\nabla_x \operatorname{G}(x,y)|\lesssim \rho(y)\mathbf{N}_{1,1}(x,y), ~~~|\nabla_x \operatorname{P}(x,z)|\lesssim \mathbf{N}_{\alpha,\alpha}(x,z)
\end{align}for all $(x,y,z)\in \overline{\Omega}\times\overline{\Omega}\times\partial\Omega,$  $ \alpha\geq 0.$ \medskip\\
\begin{proof}[Proof of Theorem \ref{121120142} and Theorem \ref{121120147}] By \eqref{1311201427a}, the following equivalence holds  
 \begin{align*}
 &\mathbf{G}\left[\left(\mathbf{P}[\sigma]\right)^q\right]\lesssim \mathbf{P}[\sigma]<\infty ~~a.e \text{ in }~ \Omega. 
\Longleftrightarrow 
 &\mathbf{N}_{2,2}\left[\left(\mathbf{N}_{2,2}[\sigma]\right)^q\rho^{q+1}\right]\lesssim \mathbf{N}_{2,2}[\sigma]<\infty ~~a.e \text{ in }~ \Omega.
 \end{align*}
Furthermore
 \begin{align*}
 U\asymp \mathbf{G}[U^q]+\mathbf{P}[\sigma] &\Longleftrightarrow U\asymp \rho\mathbf{N}_{2,2}[\rho U^q]+\rho\mathbf{N}_{2,2}[\sigma],
 \end{align*}
 which in turn is equivalent to
 $$V\asymp \mathbf{N}_{2,2}[\rho^{q+1}V^q]+\mathbf{N}_{2,2}[\sigma]\text{ with }V=U\rho^{-1}.
 $$
 By Proposition \ref{2010201417} and \ref{2010201414} we have:
\begin{align*}
 \operatorname{Cap}_{I_{\frac{2}{q}},q'}(K)\asymp\operatorname{Cap}^{q+1}_{\mathbf{N}_{2,2},q'}(K\times\{0\})\qquad\forall\,K\subset \mathbb{R}^{N-1}, K\text{ compact}.
 \end{align*}
 if $\Omega=\mathbb{R}^{N}_+$,  and
 \begin{align*}
  \operatorname{Cap}^{\partial\Omega}_{\frac{2}{q},q'}(K)\asymp\operatorname{Cap}^{q+1}_{\mathbf{N}_{2,2},q'}(K)\qquad\forall \,K\subset \partial\Omega, K\text{ compact}.
  \end{align*}
   if $\Omega$ is a bounded domain. 
 Thanks to Theorem \eqref{2010201420} with $\omega=\sigma$, $\alpha=2,\beta=2,\alpha_0=q+1$ and proposition \ref{2010201434}, we get the results.    

\end{proof}\medskip\\
\begin{proof}[Proof of Theorem \ref{1311201437} and \ref{1311201440}] 
By \eqref{1311201427a} and \eqref{1311201426a}, we have 
\begin{align}\label{211020142}
&\operatorname{G}(x,y)\leq C\rho(x)\rho(y)\mathbf{N}_{1,1}(x,y),~~|\nabla_x\operatorname{G}(x,y)|\leq C\rho(y)\mathbf{N}_{1,1}(x,y),\\
&\operatorname{P}(x,z)\leq C\rho(y)\mathbf{N}_{1,1}(x,z),~~|\nabla_x\operatorname{P}(x,z)|\leq C\mathbf{N}_{1,1}(x,z),\label{211020143}
\end{align}
for all $(x,y,z)\in\Omega\times\Omega\times\partial\Omega$ and for some constant $C>0$.\\
For any $u\in W_{loc}^{1,1}(\Omega)$, we set
\begin{align*}
\mathbf{F}(u)(x)=\int_{\Omega}\operatorname{G}(x,y)|u(y)|^{q_1-1}u(y)|\nabla u(y)|^{q_2}dy+\int_{\partial\Omega}\operatorname{P}(x,z)d\sigma(z).
\end{align*}  
Using  \eqref{211020142} and \eqref{211020143}, we have 
\begin{align*}
&|\mathbf{F}(u)|\leq C\rho(.) \mathbf{N}_{1,1}\left[|u|^{q_1}|\nabla u|^{q_2}\rho(.)\right]+C\rho(.)\mathbf{N}_{1,1}[|\sigma|],\\&
|\nabla \mathbf{F}(u)|\leq C \mathbf{N}_{1,1}\left[|u|^{q_1}|\nabla u|^{q_2}\rho(.)\right]+C\mathbf{N}_{1,1}[|\sigma|].
\end{align*} 
Therefore, we can easily  see that if \begin{align}\label{2010201448}
\mathbf{N}_{1,1}\left[\left(\mathbf{N}_{1,1}[|\sigma|]\right)^{q_1+q_2}(\rho(.))^{q_1+1}\right]\leq \frac{\left(q_1+q_2-1\right)^{q_1+q_2-1}}{\left(C(q_1+q_2)\right)^{q_1+q_2}} \mathbf{N}_{1,1}[|\sigma|]<\infty ~~a.e \text{ in }~~ \Omega
\end{align} 
 holds, then $\mathbf{F}$ is the map from $\mathbf{E}$ to $\mathbf{E}$, where 
 $$\mathbf{E}=\left\{u\in W_{loc}^{1,1}(\Omega): |u|\leq \lambda\rho(.)\mathbf{N}_{1,1}[|\sigma|],~ |\nabla u|\leq \lambda\mathbf{N}_{1,1}[|\sigma|]~~\text{ a.e in }~\Omega\right\}$$ 
 with $\lambda=\frac{C(q_1+q_2)}{q_1+q_2-1}$. \\
Assume that \eqref{2010201448} holds.
 We denote $\cal S$ by  the subspace of functions  $f\in W_{loc}^{1,1}(\Omega)$ with norm 
 \begin{align*}
 ||f||_{{\cal S}}=||f||_{L^{q_1+q_2}(\Omega,(\rho(.))^{1-q_2}dx)}+|||\nabla f|||_{L^{q_1+q_2}(\Omega,(\rho(.))^{1+q_2}dx)}<\infty.
 \end{align*}
 Clearly, $\mathbf{E}\subset \cal S$,   $\mathbf{E}$ is closed under the strong topology of $\cal{S}$ and convex. \\
 On the other hand, 
 it is not difficult to show that  $\mathbf{F}$ is continuous and $\mathbf{F}(\mathbf{E})$ is precompact in $\cal{S}$.
 Consequently,  by  Schauder's fixed point theorem, there exists $u\in \mathbf{E}$ such that  $\mathbf{F}(u)=u$. Hence, $u$ is a solution of \eqref{1311201439}-\eqref{2010201447} and it satisfies 
 \begin{align*}
 |u|\leq \lambda\rho(.)\mathbf{N}_{1,1}[|\sigma|],~ |\nabla u|\leq \lambda\mathbf{N}_{1,1}[|\sigma].
 \end{align*} 
 Thanks to Theorem \ref{2010201420}  and Proposition \ref{2010201417}, \ref{2010201414}, we verify that assumptions \eqref{1311201438} and \eqref{2010201447} in Theorem \ref{1311201437} and \ref{1311201440} are equivalent to \eqref{2010201448}. 
This completes the proof of the Theorems. \end{proof} 
\begin{remark}\label{opt} We do not know whether  conditions \eqref{1311201438} and \eqref{2010201446} are
optimal or not. It is noticeable that if  $\mathbf{P}[|\sigma|]\in L^{q_1+q_2}(\Omega,\rho^{1-q_2} dx)$, it is proved in \cite[Th 1.1] {MV2} that, if $\Omega$ is a ball, then $|\sigma|$ belongs to the Besov-Sobolev space $B^{-\frac{2-q_2}{q_1+q_2},q_1+q_2}(\partial\Omega)$. Therefore  inequality
 \begin{align*}
 |\sigma|(K)\leq C \left(\operatorname{Cap}^{\partial\Omega}_{\frac{2-q_2}{q_1+q_2},(q_1+q_2)'}(K)\right)^{\frac{1}{(q_1+q_2)'}}\end{align*}
 holds for any Borel set $K\subset\partial\Omega$, and it is a necessary condition for \eqref{2010201446} to hold since $\frac{1}{(q_1+q_2)'}<1$. In a general $C^2$ bounded domain, it is easy to see that this property, proved in a particular case in \cite[Th 2.2] {MV1} is still valid thanks  to the equivalence relation (2.23) therein between Poisson's kernels, see also the proof of Proposition \ref{2010201414}. The difficulty for obtaining a necessary condition of existence lies in the fact that, if the inequality $u\geq \mathbf{P}[\sigma]$ is clear, $|\nabla u|\gtrsim \rho^{-1}\mathbf{P}[\sigma]$ is not true.
It can also be shown that if 
 $$|u|^{q_1}|\nabla u|^{q_2}\leq C (\mathbf{G}(|\sigma|))^{q_1}(\rho\mathbf{N}_{1,1}[|\sigma|])^{q_2}\in L^1(\Omega,\rho(.) dx),$$  
then $\sigma $   is absolutely continuous with respect to  $ \operatorname{Cap}^{\partial\Omega}_{\frac{2-q_2}{q_1+q_2},(q_1+q_2)'}$.
\end{remark}
\section{Extension to Schr\"odinger operators with Hardy potentials}
We can apply Theorem \ref{2010201420}  to solve the problem 
\begin{equation*}\begin{array}{lll}
-\Delta u -\frac{\kappa}{\rho^2}u= u^q~~&\text{in}~\Omega, \\
\phantom{-\Delta u -\frac{\kappa}{\rho^2}}
u = \sigma\quad ~&\text{on }~\partial\Omega,\\ 
\end{array}\end{equation*} 
where $\kappa\in [0,\frac{1}{4}]$ and $\sigma\in\mathfrak{M}^+(\partial\Omega)$.\smallskip

Let $\operatorname{G}_\kappa,\operatorname{P}_\kappa$ be the Green kernel and Poisson kernel  of $-\Delta-\frac{\kappa}{\rho^2}$ in $\Omega$ with $\kappa\in [0,\frac{1}{4}]$. It is proved that 
\begin{align*}
&\operatorname{G}_{\kappa}(x,y)\asymp  \min\left\{\frac{1}{|x-y|^{N-2}}, \frac{(\rho(x)\rho(y))^{\frac{1+\sqrt{1-4\kappa}}{2}}}{|x-y|^{N-1+\sqrt{1-4\kappa}}}\right\},\\&
\operatorname{P}_{\kappa}(x,z)\asymp \frac{(\rho(x))^{\frac{1+\sqrt{1-4\kappa}}{2}}}{|x-z|^{N-1+\sqrt{1-4\kappa}}},
\end{align*}
 for all $(x,y,z)\in \overline{\Omega}\times\overline{\Omega}\times\partial\Omega$, see \cite{FMT,MT1,GV}. 
Therefore, from \eqref{051220145} we get 
\begin{align}\label{1311201427}
&\operatorname{G}_{\kappa}(x,y)\asymp  (\rho(x)\rho(y))^{\frac{1+\sqrt{1-4\kappa}}{2}}\mathbf{N}_{1+\sqrt{1-4\kappa},2}(x,y),\\&\label{1311201428}
\operatorname{P}_\kappa(x,z)\asymp (\rho(x))^{\frac{1+\sqrt{1-4\kappa}}{2}}\mathbf{N}_{\alpha,1-\sqrt{1-4\kappa}+\alpha}(x,z),
\end{align}    for all $(x,y,z)\in \overline{\Omega}\times\overline{\Omega}\times\partial\Omega,$  $ \alpha\geq 0.$ 
We denote 
\begin{align*}
\mathbf{P}_\kappa[\sigma](x)=\int_{\partial\Omega}\operatorname{P}_\kappa(x,z)d\sigma(z), ~~\mathbf{G}_\kappa[f](x)=\int_{\Omega}\operatorname{G}_\kappa(x,y)f(y)dy
\end{align*}
for any $\sigma\in \mathfrak{M}^+(\partial\Omega), f\in L^1(\Omega,\rho^{\frac{1+\sqrt{1-4\kappa}}{2}}dx), f\geq 0$. Then  the unique  weak solution of 
$$\begin{array}{lll}
-\Delta u-\frac{\kappa}{\rho^2}u=f\qquad&\text {in }\Omega,\\
\phantom{ -\Delta u-\frac{\kappa}{\rho^2}}
u=\sigma\qquad&\text {on }\partial\Omega,
\end{array}$$ 
satisfies the following integral equation \cite{GV} 
\begin{align*}
u=\mathbf{G}_\kappa[f]+\mathbf{P}_\kappa[\sigma]~~\text{a.e. in } \Omega.
\end{align*}
 As in the proofs of Theorem \ref{121120142} and Theorem \ref{121120147} the relation
\begin{align*}
\mathbf{G}_\kappa\left[\left(\mathbf{P}_\kappa[\sigma]\right)^q\right]\lesssim \mathbf{P}_\kappa[\sigma]<\infty ~~\text{a.e  in }~~ \Omega, 
\end{align*}
is equivalent to 
\begin{align*}
\mathbf{N}_{1+\sqrt{1-4\kappa},2}\left[\left(\mathbf{N}_{1+\sqrt{1-4\kappa},2}[\sigma]\right)^q\rho^{\frac{(q+1)(1+\sqrt{1-4\kappa})}{2}}\right]\lesssim \mathbf{N}_{1+\sqrt{1-4\kappa},2}[\sigma]<\infty ~~\text{a.e  in }~~ \Omega,
\end{align*}
and the relation
\begin{align*}
U\asymp \mathbf{G}_\kappa[U^q]+\mathbf{P}_\kappa[\sigma],
\end{align*} 
is equivalent to 
\begin{align*}V\asymp \mathbf{N}_{1+\sqrt{1-4\kappa},2}[\rho^{\frac{(q+1)(1+\sqrt{1-4\kappa})}{2}}V^q]+\mathbf{N}_{1+\sqrt{1-4\kappa},2}[\sigma]\quad\text{with } V=U\rho^{-\frac{1+\sqrt{1-4\kappa}}{2}}.
\end{align*}
Thanks to Theorem \ref{2010201420} with $\omega=\sigma$,  $\alpha=1+\sqrt{1-4\kappa},\beta=2,\alpha_0=\frac{(q+1)(1+\sqrt{1-4\kappa})}{2}$ and proposition \ref{2010201434}, \ref{2010201417}, \ref{2010201414}, we  obtain. 
\begin{theorem} \label{1311201410}Let $q>1, 0\leq \kappa\leq \frac{1}{4}$ and $\sigma\in \mathfrak{M}^+(\partial\Omega)$.  Then,  the following statements are equivalent
\begin{description}
  \item${\bf 1}$ There exists $C>0$ such that the following inequalities hold  
\begin{align}\label{2010201428}
\sigma(O)\leq C \operatorname{Cap}_{I_{\frac{q+3-(q-1)\sqrt{1-4\kappa}}{2q}},q'}(O)\end{align}
for any Borel set $O\subset\mathbb{R}^{N-1}$ if $\Omega=\mathbb{R}^{N}_+$  and 
\begin{align}\label{2010201429}
  \sigma(O)\leq C \operatorname{Cap}^{\partial\Omega}_{\frac{q+3-(q-1)\sqrt{1-4\kappa}}{2q},q'}(O)\end{align}
for any Borel set $O\subset\partial\Omega$ if  $\Omega$ is a bounded domain.    
   \item${\bf 2}$ There exists $C>0$ such that the inequality
 \begin{align}
 \label{2010201433b}
& \mathbf{G}_\kappa\left[\left(\mathbf{P}_\kappa[\sigma]\right)^q\right]\leq C \mathbf{P}_\kappa[\sigma]<\infty ~~a.e \text{ in }~~ \Omega,
 \end{align}
 holds.
    
   \item[3.] Problem 
    \begin{equation}\label{2010201431} \begin{array}{lll}
 -\Delta u -\frac{\kappa}{\rho^2}u= u^q~~&\text{in}~\Omega, \\
 \phantom{ -\Delta u -\frac{\kappa}{\rho^2}}
 u = \varepsilon\sigma\quad ~&\text{on }~\partial\Omega,\\ 
 \end{array} \end{equation}
   has a positive solution for $\varepsilon>0$ small enough.
   \end{description}
   Moreover, there is a constant $C_0>0$ such that if any one
   of the two statements ${\bf 1}$ and ${\bf 2}$  holds with $C\leq C_0$, then equation \ref{2010201431} 
   has a solution u with $\varepsilon=1$ which satisfies 
   \begin{align}
   u\asymp \mathbf{P}_\kappa[\sigma].
   \end{align}
   Conversely, if \eqref{2010201431} has
   a solution u with $\varepsilon=1$, then the two statements ${\bf 1}$ and ${\bf 2}$ hold for some $C>0$.    
 \end{theorem}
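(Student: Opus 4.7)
The plan is to reduce the Hardy-source problem \eqref{2010201431} to an abstract integral equation of the type covered by Theorem \ref{2010201420}, via the substitution $V := u\,\rho^{-(1+\sqrt{1-4\kappa})/2}$, and then identify the abstract capacity so produced with the concrete Riesz or Bessel boundary capacity via Propositions \ref{2010201417} and \ref{2010201414}. All the analytic machinery is already in Section 2, so the proof is essentially bookkeeping together with some exponent arithmetic.

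With the parameters $\alpha := 1+\sqrt{1-4\kappa}$, $\beta := 2$, and $\alpha_0 := (q+1)(1+\sqrt{1-4\kappa})/2$, the sharp two-sided bounds \eqref{1311201427}--\eqref{1311201428} give $\operatorname{G}_\kappa(x,y) \asymp \rho(x)^{\alpha/2}\rho(y)^{\alpha/2}\,\mathbf{N}_{\alpha,2}(x,y)$ and $\operatorname{P}_\kappa(x,z) \asymp \rho(x)^{\alpha/2}\,\mathbf{N}_{\alpha,2}(x,z)$; the second estimate uses the cancellation $1 - \sqrt{1-4\kappa} + \alpha = 2$. A direct computation shows $\rho^{\alpha/2} u^q = \rho^{\alpha_0} V^q$, so the integral representation $u = \mathbf{G}_\kappa[u^q] + \mathbf{P}_\kappa[\varepsilon\sigma]$ of a solution to \eqref{2010201431} becomes $V \asymp \mathbf{N}_{\alpha,2}[\rho^{\alpha_0} V^q] + \varepsilon\,\mathbf{N}_{\alpha,2}[\sigma]$, as already sketched in the paragraph preceding the theorem. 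Proposition \ref{2010201434} converts this $\asymp$-statement into the strict integral equation $V = \mathbf{N}_{\alpha,2}[\rho^{\alpha_0} V^q] + \varepsilon'\,\mathbf{N}_{\alpha,2}[\sigma]$ at the cost of absorbing a multiplicative constant into $\varepsilon$.

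I would then apply Theorem \ref{2010201420} to this integral equation with kernel $\mathbf{N}_{\alpha,2}$, weight $d\nu = \rho^{\alpha_0}\chi_\Omega\,dx$, and $\omega = \sigma$; its abstract hypotheses are furnished by Lemmas \ref{201020141} and \ref{011220141}. It yields the equivalence of (i) solvability of the integral equation for some small $\varepsilon > 0$, (ii) the pointwise inequality $\mathbf{N}_{\alpha,2}[(\mathbf{N}_{\alpha,2}[\sigma])^q\rho^{\alpha_0}] \lesssim \mathbf{N}_{\alpha,2}[\sigma] < \infty$ a.e., and (iii) the capacity bound $\sigma(K) \leq C\,\operatorname{Cap}^{\alpha_0}_{\mathbf{N}_{\alpha,2},q'}(K)$ on compacts. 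Reverting $u = V\rho^{\alpha/2}$ and using once more the kernel equivalences turns (ii) into \eqref{2010201433b}. To translate (iii) into \eqref{2010201428}--\eqref{2010201429} I would invoke Proposition \ref{2010201417} (for $\Omega = \mathbb{R}^N_+$) or Proposition \ref{2010201414} (for a bounded $C^2$ domain). The admissibility condition $-1 + q(\alpha-1) < \alpha_0 < -1 + q(N-2+\alpha)$ reduces to the elementary inequality $(q+3) - (q-1)\sqrt{1-4\kappa} > 0$ at the lower end (obvious since $(q-1)\sqrt{1-4\kappa} \leq q-1 < q+3$) and to a much weaker inequality at the upper end (automatic for $N \geq 3$ and $q > 1$). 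A direct computation gives the concrete Riesz/Bessel order $\beta - \alpha + \frac{\alpha_0+1}{q} - 1 = \frac{q+3 - (q-1)\sqrt{1-4\kappa}}{2q}$, matching the exponent in \eqref{2010201428}--\eqref{2010201429}.

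The quantitative moreover part, namely that $u \asymp \mathbf{P}_\kappa[\sigma]$ at $\varepsilon = 1$ when $C \leq C_0$, together with the converse statement for any existing solution, follows by tracking the small constant $C_0$ through the Picard-type iteration inside Theorem \ref{2010201420} and the constant-absorption in Proposition \ref{2010201434}, exactly as in the proofs of Theorems \ref{121120142} and \ref{121120147}. I do not anticipate a serious obstacle: the entire argument is a reduction to the machinery of Section 2, and the only real risk lies in the exponent arithmetic that verifies the two cancellations $1-\sqrt{1-4\kappa}+\alpha = 2$ and $\rho^{\alpha/2}(V\rho^{\alpha/2})^q = \rho^{\alpha_0}V^q$, and in checking that the chosen $\alpha_0$ sits strictly inside the admissibility window of Propositions \ref{2010201417} and \ref{2010201414}.
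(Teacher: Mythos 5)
Your proposal is correct and follows essentially the same route as the paper: the substitution $V=u\,\rho^{-(1+\sqrt{1-4\kappa})/2}$ together with the kernel equivalences \eqref{1311201427}--\eqref{1311201428} reduces everything to Theorem \ref{2010201420} with $\alpha=1+\sqrt{1-4\kappa}$, $\beta=2$, $\alpha_0=\frac{(q+1)(1+\sqrt{1-4\kappa})}{2}$, and Propositions \ref{2010201434}, \ref{2010201417}, \ref{2010201414} supply the remaining identifications exactly as you describe. Your exponent arithmetic (the capacity order $\frac{q+3-(q-1)\sqrt{1-4\kappa}}{2q}$ and the admissibility window) checks out.
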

 
 \begin{remark} The problem \eqref{2010201431} admits a subcritical range
 $$1<q< \frac{N+\frac{1+\sqrt{1-4\kappa}}{2}}{N+\frac{1+\sqrt{1-4\kappa}}{2}-2}.
 $$
 If the above inequality, the problem can be solved with any positive measure provided $\sigma(\partial\Omega)$ is small enough. 
 The role of this critical exponent has been pointed out in \cite{MT1} and \cite{GV} for the removability of boundary isolated singularities of solutions of 
 $$ -\Delta u -\frac{\kappa}{\rho^2}u+ u^q=0~\text{in}~\Omega
 $$
 i.e. solutions which vanish on the boundary except at one point. Furthermore the complete study of the problem
     \begin{equation}\label{2010201431+1} \begin{array}{lll}
 -\Delta u -\frac{\kappa}{\rho^2}u+ u^q=0\quad&\text{in}~\Omega, \\
 \phantom{ -\Delta u -\frac{\kappa}{\rho^2}+ u^q}
 u = \sigma\quad ~&\text{on }~\partial\Omega,\\ 
 \end{array} \end{equation}
 is performed in \cite{GV} in the supercritical range
  $$q\geq \frac{N+\frac{1+\sqrt{1-4\kappa}}{2}}{N+\frac{1+\sqrt{1-4\kappa}}{2}-2}.
 $$
 The necessary and sufficient condition is therein expressed in terms of the absolute continuity of $\sigma$ with respect to the $\operatorname{Cap}_{I_{\frac{q+3-(q-1)\sqrt{1-4\kappa}}{2q}},q'}$-capacity.
 \end{remark}


\begin{thebibliography}{99}
 \bibitem{55AH} D. R. Adams, L.I. Heberg, {\em Function Spaces and Potential Theory},  Grundlehren der Mathematischen Wisenschaften {\bf 31}, Springer-Verlag (1999).
 
 
 \bibitem{BiVi} M.F. Bidaut-V\'eron, L. Vivier, {\em An elliptic semilinear equation with source term involving boundary measures: the subcritical case },  Rev. Mat. Iberoamericana, {\bf 16} (2000), 477-513.
 
\bibitem{BiYa} M.F. Bidaut-V\'eron, C. Yarur, {\em Semilinear elliptic equations and systems with measure data: existence and a priori estimates},  Advances in Diff. Equ. 7 (2002), 257-296.
 
 \bibitem{55VHV} M. F. Bidaut-V\'eron, H. Nguyen Quoc, L. V\'eron, {\em Quasilinear Lane-Emden equations with absorption and measure data},  J. Math. Pures Appl. {\bf 102},  315-337 (2014).  
 
 \bibitem{Dyn} E.B. Dynkin, Superdiffusions and positive solutions of nonlinear
 partial differential equations, American Mathematical Society, Providence, RI, 2004. 
 \bibitem{Fe}  C. Fefferman, {\em The uncertainty principle}, Bull. Amer. Math.Soc. {\bf 9}  (1983),129-206.
 
 \bibitem{FMT} S. Filippas, L. Moschini, A. Tertikas, {\em Sharp two-sided heat kernel estimates for critical
Schr\"odinger operators on bounded domains}, Comm. Math. Phys. {\bf 273}  (2007), 237-281. 

  \bibitem{GV} K. T. Gkikas, L. V\'eron,
  {\em  Boundary singularities of solutions of semilinear elliptic equations with critical Hardy potentials}, Nonlinear Anal. T. M. \& A.,
   doi:10.1016/j.na.2015.03.004, to appear (2015).
  
  \bibitem{GmV} A. Gmira, L. V\'eron, {\em  Boundary singularities of solutions of some nonlinear elliptic equations}, Duke Math. J. {\bf 64} (1991), 271-324.
   
 \bibitem{KaVe} N.J. Kalton, I.E. Verbitsky, {\em Nonlinear equations and weighted norm inequality}, Trans. Amer. Math. Soc. {\bf 351}  (1999) 3441-3497.
 
 \bibitem{NPhM} M. Marcus, T. Nguyen Phuoc, {\em Positive solutions of quasilinear elliptic equations with subquadratic growth in the gradient}, arXiv:1311.7519v1 (2013).
  
\bibitem{MT1} M. Marcus,  P. T. Nguyen, {\em  Moderate solutions of semilinear elliptic equations with Hardy potential}, arXiv:1407.3572v1 (2014).
 
 \bibitem{MV1} M. Marcus, L. V\'eron, {\em Removable singularities and boundary trace}, J. Math. Pures Appl. {\bf 80}  (2001), 879-900.
 
 \bibitem{MV2} M. Marcus, L. V\'eron, {\em On a New Characterization of Besov Spaces with Negative Exponents}, Around the research of Vladimir Maz'ya. I, 273-284, Int. Math. Ser. (N. Y.) {\bf11}, Springer, New York, 2010.
 

\bibitem{MV4} M. Marcus, L. V\'eron, {\em Boundary trace of positive solutions of semilinear elliptic equations in Lipschitz domains: the subcritical case}, Ann. Sc. Norm. Super. Pisa Cl. Sci. (5) {\bf 10}  (2011), 913-984.

\bibitem{MV5} M. Marcus, L. V\'eron, {\em Nonlinear Second Order Elliptic Equations Involving Measures}, De Gruyter Series in Nonlinear Analysis and Applications {\bf 21}, De Gruyter, Berlin (2014), xiv+248 pp.

 \bibitem{NPhV} T. Nguyen Phuoc, L. V\'eron, {\em Boundary singularities of solutions to elliptic viscous HamiltonÐJacobi equations}, J. Funct. An. {\bf 263}  (2012) 1487-1538.
 
 \bibitem{QH}Quoc-Hung Nguyen, {\em Potential estimates and quasilinear parabolic equations with measure data,}, hal-00989464, version 2 (2014), submitted.
\end{thebibliography}
\end{document}